 \theoremstyle{theorem}
 \newtheorem{thm}{Theorem}[section]
 \newtheorem{cor}[thm]{Corollary}
 \newtheorem{prop}[thm]{Proposition}
 \theoremstyle{definition}
 \theoremstyle{remark}
 \newtheorem{rem}[thm]{Remark}
 \newtheorem{exa}[thm]{Example}
 \journal{J. Math. Anal. Appl.}
\begin{document}

 \begin{frontmatter}

 \title{Self-adjointness of unbounded tridiagonal operators and spectra of their finite truncations}

 \author[1]{E. N. Petropoulou}
 \ead{jenpetr@upatras.gr}
 \address[1]{Department of Civil Engineering, Division of Geotechnical Engineering and Hydraulic Engineering, University of Patras, 26504 Patras, Greece.}

 \author[2]{L. Vel\'azquez\corref{3}}
 \ead{(velazque@unizar.es)}
 \address[2]{Department of Applied Mathematics $\&$ IUMA, Universidad de Zaragoza,
C/Mar\'{i}a de Luna 3, 50018 Zaragoza, Spain.}
 \cortext[3]{Corresponding author}

 \begin{abstract}
This paper addresses two different but related questions regarding an unbounded symmetric tridiagonal operator: its self-adjointness and the approximation of its spectrum by the eigenvalues of its finite truncations. The sufficient conditions given in both cases improve and generalize previously known results. It turns out that, not only self-adjointness helps to study limit points of eigenvalues of truncated operators, but the analysis of such limit points is a key help to prove self-adjointness. Several examples show the advantages of these new results compared with previous ones. Besides, an application to the theory of continued fractions is pointed out.
 \end{abstract}

 \begin{keyword}
unbounded symmetric operators \sep Jacobi matrices \sep self-adjointness \sep spectrum of an operator \sep limit points of eigenvalues \sep zeros of orthogonal polynomials \sep Jacobi continued fractions.

\MSC 47B25 \sep 47B36
 \end{keyword}

 \end{frontmatter}

 \section{Introduction}
 \label{Intro}
 \setcounter{equation}{0}
 \renewcommand{\theequation}{\thesection.\arabic{equation}}
 \setcounter{thm}{0}
 \renewcommand{\thethm}{\thesection.\arabic{thm}}
 \par
Symmetric tridiagonal matrices provide the canonical matrix representations of self-adjoint operators in Hilbert spaces \cite{S1932} and, as a consequence, they naturally emerge in phenomena governed by self-adjoint operators. On the other hand, self-adjoint operators are ubiquitous in practical applications because of the usual requirement of a real spectrum in physical problems. Due to these reasons, symmetric tridiagonal operators appear in many areas of mathematics and physics.

A symmetric tridiagonal operator $T$ in an infinite dimensional Hilbert space $(H,(\cdot,\cdot))$ with an orthonormal base $\{e_n\}_{n=1}^\infty$ is given without loss by
 \begin{equation}
 Te_n = a_ne_{n+1} + b_ne_n + a_{n-1}e_{n-1},
 \quad a_n>0, \quad b_n\in\mathbb{R}, \quad n=1,2,\dots,
 \label{Intro_T}
 \end{equation}
where $e_0=0$. The matrix representation of $T$ in the basis $\{e_n\}_{n=1}^\infty$ is
 \begin{equation}
 J=\left( {\begin{array}{*{20}c}
   {b_1 } & {a _1 } & 0 & 0 & 0 & {...}  \\
   {a _1 } & {b_2 } & {a _2 } & 0 & 0 & {...}  \\
   0 & {a _2 } & {b_3 } & {a _3 } & 0 & {...}  \\
   0 & 0 & {a _3 } & {b_4 } & {a _4 } & {...}  \\
   {...} & {...} & {...} & {...} & {...} & {...}  \\
   \end{array}} \right),
 \label{Intro_Jacobi}
 \end{equation}
which is known as a Jacobi matrix. It is assumed that $a_n>0$ because the complex conjugated upper and lower diagonals can be made non-negative by a change of basis $e_n \to \eta_ne_n$, $|\eta_n|=1$, while setting $a_n=0$ for some $n$ splits \eqref{Intro_Jacobi} into a direct sum of Jacobi matrices that can be analyzed independently.

If $P_N$ is the orthogonal projection onto the subspace $H_N=\operatorname{span}\{e_n\}_{n=1}^N$, the composition $T_N=P_N T P_N$ defines an operator in $H_N$ called the orthogonal truncation of $T$ on $H_N$. Its matrix representation in the basis $\{e_n\}_{n=1}^N$ is the principal submatrix of \eqref{Intro_Jacobi} of order $N$.

Expression \eqref{Intro_T} defines a symmetric operator in the linear span of $\{e_n\}_{n=1}^\infty$, but we will identify $T$ with the closure of such an operator, which is known to be symmetric too. Then, either $T$ is self-adjoint, or $T$ has infinitely many self-adjoint extensions. In the latter case, the self-adjoint extensions have pure point spectra with any two disjoint \cite[Theorem 4.11]{S1998}. Different self-adjoint extensions can appear only when $T$ is unbounded, which is equivalent to saying that some of the sequences $a_n$ or $b_n$ is unbounded. Thus, self-adjointness is non trivial only in the unbounded case, which is also of practical interest since unbounded operators naturally appear in applications.

The general spectral problem for unbounded Jacobi matrices and, more specifically, approximation problems concerning such a spectrum have been considered in several studies. Indicatively we mention the recent works \cite{CJ2007,DP2002,IKP2007,JM2007,JN2001,JN2004,JNS2009,M2007,M2009,M2010,MZ2012,S2008,V2004}.

The present paper deals with two closely related problems concerning unbounded symmetric tridiagonal operators $T$: the search for self-adjointness conditions for $T$ which go further than known ones, and the possibility of approximating the spectrum $\sigma(T)$ of $T$ via the spectra $\sigma(T_N)$ of its orthogonal truncations $T_N$. To be more precise, let us denote by $\Lambda(T)$ the set of all limit points of the eigenvalues of $T_N$ when $N\to\infty$, i.e.
 \[
 \begin{gathered}
 \Lambda(T) = \left\{
 \lambda\in\displaystyle\mathop{\operatorname{Lim}}_{N\to\infty}\lambda_N :
 \lambda_N\in\sigma(T_N)
 \right\},
 \\
 \mathop{\operatorname{Lim}}_{N\to\infty}\lambda_N =
 \text{set of limit points of the sequence } \lambda_N.
 \end{gathered}
 \]
Information about $\Lambda(T)$ is of great importance not only from the point of view of operator theory, but also for the theory of continued fractions, orthogonal polynomials and numerical analysis (see \cite{IP2001} and the references therein).

In particular, the eigenvalues of $T_N$ coincide with the zeros of the polynomial $p_{N+1}(x)$ given by the recurrence relation
 \begin{equation}
 a_{n}p_{n+1}(x)+b_{n}p_{n}(x)+a_{n-1}p_{n-1}(x)=xp_{n}(x),
 \quad n=1,2,\ldots,
 \label{Intro_OPRR}
 \end{equation}
with $p_{0}(x)=0$ and $p_{1}(x)=1$. Thus, $\Lambda(T)$ coincides with the set of limit points of the zeros of the orthogonal polynomials $p_n(x)$ satisfying (\ref{Intro_OPRR}).

Besides, if $T$ is self-adjoint, the Jacobi continued fraction
 \begin{equation}
 K(\lambda) = \polter{1}{\lambda-b_1} - \polter{a_1^2}{\lambda-b_2} -
 \polter{a_2^2}{\lambda-b_3} - \cdots
 \label{Intro_CF}
 \end{equation}
converges to the function $\displaystyle\left((\lambda-T)^{-1}e_1,e_1\right)$ for every $\lambda\in\mathbb{C}\setminus\Lambda(T)$ \cite{B1994,IP2001}.

The self-adjointness of $T$ ensures the inclusion $\Lambda(T)\supseteq\sigma(T)$, although in general it does not guarantee the equality $\Lambda(T)=\sigma(T)$ (see for instance \cite{A1994,B1994,IP2001,IS1995,S1932}, and also \cite[Proposition 2.1]{CMV2006} for a generalization to normal band operators). When $T$ is not self-adjoint even the inclusion $\Lambda(T)\supseteq\sigma(T)$ can fail. This means that the relation between $\Lambda(T)$ and $\sigma(T)$ is more involved for an unbounded symmetric tridiagonal operator $T$ than for a bounded one.

In the bounded case several sufficient conditions for $\Lambda(T)=\sigma(T)$ can be found in the literature, but not much is known in the unbounded case (see \cite{IKP2007,IP2001}, also \cite{BLMT1998,BLT1995} for related problems concerning non-symmetric tridiagonal operators, and \cite{CMV2006} for extensions to unitary CMV operators). In particular, the authors of \cite{IKP2007} established sufficient conditions for $\Lambda(T)=\sigma(T)$ when $b_n$ is divergent, which regard the limits of some functions of $a_n$, $b_n$. However, the results as stated in \cite{IKP2007} only ensure that these conditions lead to $\Lambda(T)=\sigma(T)$ when $a_n$ is bounded. Otherwise they simply imply $\Lambda(T)\subseteq\sigma(T)$, while the opposite inclusion needs the additional assumption that $T$ is self-adjoint.

In this paper we push forward in different directions the ideas introduced in \cite{IKP2007} to study the unbounded case. In \S\ref{BR}, we start with a brief review of the results about $\Lambda(T)$ in \cite{IKP2007}, together with a new general result on self-adjointness which is achieved using limit point arguments (see Theorem \ref{thm-self} and Remark \ref{rem-interlace}). Then, the procedure used in \cite{IKP2007} is described so that it can be iterated to generate infinitely many sufficient conditions for $\Lambda(T)\subseteq\sigma(T)$. It is also proved that any of these conditions guarantees by itself the self-adjointness of $T$, thus the equality $\Lambda(T)=\sigma(T)$ (see Theorems \ref{Ifantis-update} and \ref{strong}).

This will prepare us to \S\ref{GmC} which discusses the recursion leading to the alluded infinitely many conditions for self-adjointness and $\Lambda(T)=\sigma(T)$. Although these conditions become exponentially intricate as the recursion advances, taking advantage of their qualitative dependence on $a_n$ and $b_n$ allows us to obtain very simple and general conditions for $\Lambda(T)=\sigma(T)$ which cover many of the examples in the literature (see Theorem \ref{weak}).

The iterative procedure giving infinitely many self-adjointness conditions can be exported to other contexts than the analysis of the set $\Lambda(T)$. In \S\ref{OmC}, we apply this idea to Carleman's criterion, and also to a self-adjointness condition which resembles another one due to J.~Janas and S.~Naboko.

In \S\ref{Comparisons}, several examples show the usefulness of the sufficient conditions previously obtained, comparing them with known results. Finally, some consequences in the theory of continued fractions are remarked in \S\ref{Applications}.

 \section{Basic results on $\Lambda(T)$ and self-adjointness}
 \label{BR}
 \setcounter{equation}{0}
 \renewcommand{\theequation}{\thesection.\arabic{equation}}
 \setcounter{thm}{0}
 \renewcommand{\thethm}{\thesection.\arabic{thm}}
 \par
Let $T$ be the operator defined by (\ref{Intro_T}), considered as the closure of that one with domain span$\{e_n\}_{n=1}^\infty$. The following results were proved in \cite{IKP2007}:
 \begin{align}
 \text{If $\lim_{n\to\infty}a_n=0$ then
 $T$ is self-adjoint and $\Lambda(T)=\sigma(T)$.}
 \label{IfantisThm1}
 \end{align}
 Also, if $\lim_{n\to\infty}b_n=\infty$, any of the conditions
 \begin{align}
 & \lim_{n\to\infty}\frac{a_n a_{n-1}}{b_n}=0,
 \label{IfantisThm2}
 \\
 & \lim_{n\to\infty} \frac{a_n a_{n-1}(a_{n-1}+a_{n-2})}{b_n b_{n-1}}=0,
 \label{IfantisThm3}
 \\
 & \lim_{n\to\infty}\frac{a_n a_{n-1}}{b_nb_{n-1}}
 \left[
 \frac{a_{n-1}^{2}}{b_n} + \frac{a_{n-2}(a_{n-2} + a_{n-3})}{b_{n-2}}
 \right]=0,
 \label{IfantisThm4}
 \end{align}
implies that $\Lambda(T)\subseteq\sigma(T)$. Furthermore, this inclusion becomes an equality when $T$ is self-adjoint. However, \cite{IKP2007} does not address the question of the self-adjointness of $T$ under conditions \eqref{IfantisThm2}--\eqref{IfantisThm4}, which is capital to guarantee the equality $\Lambda(T)=\sigma(T)$.

The proofs of the above results rely on a few arguments which we explicitly dissect below as a first step to carry out the extension of the method leading to \eqref{IfantisThm1}--\eqref{IfantisThm4}. Moreover, these arguments will also be used to prove that conditions \eqref{IfantisThm2}--\eqref{IfantisThm4} and their eventual extensions actually ensure the self-adjointness of $T$ and hence the equality $\Lambda(T)=\sigma(T)$.

The truncated operator $T_N$ is self-adjoint and has a complete set of orthonormal eigenvectors in $H_N$ with distinct real eigenvalues. Assuming $\lambda\in\Lambda(T)$ is equivalent to the existence of a subsequence of $T_N$, which will be also denoted by $T_N$ without loss, such that
 \begin{equation}
 \lim_{N\to\infty}\lambda_N=\lambda,
 \quad
 T_N x_N=\lambda_N x_N, \quad \|x_N\|=1, \quad x_N\in H_N.
 \label{Pre_BasicEigen}
 \end{equation}
The splitting
 $\|Tx_N\|^2 = \|P_NTx_N\|^2 + \|(I-P_N)Tx_N\|^2
 = \lambda_N^2 + a_N^2|(x_N,e_N)|^2$
gives the identity
 \begin{equation}
 \|(T-\lambda)x_N\|^2 = \|Tx_N\|^2 + \lambda^2 - 2\lambda\lambda_N
 = (\lambda-\lambda_N)^2 + a_N^2|(x_N,e_N)|^2.
 \label{IfantisId3bis}
 \end{equation}
As a consequence of this result, the condition
 \begin{equation}
 \lim_{N\to\infty} a_N (x_N,e_N) = 0
 \label{gencond}
 \end{equation}
implies that $\lim_{N\to\infty}\|(T-\lambda)x_N\|=0$, so that the limit point $\lambda$ lies on $\sigma(T)$.

The rest of the idea consists in finding asymptotic conditions for $a_n$ and $b_n$ ensuring \eqref{gencond} for any sequence $x_N$ of eigenvectors of $T_N$ with a   convergent sequence $\lambda_N$ of eigenvalues (actually, the only assumption in \cite{IKP2007} to obtain such conditions is the boundedness of $\lambda_N$). Bearing in mind the previous comments, these asymptotic conditions imply that $\Lambda(T)\subseteq\sigma(T)$.

The surprising new result is that condition \eqref{gencond} is also key to guarantee the self-adjointness of $T$ and thus the opposite inclusion $\Lambda(T) \supseteq \sigma(T)$. This result, missing in \cite{IKP2007} despite the close connection with the ideas developed there, will allow us to improve the consequences of \eqref{IfantisThm2}--\eqref{IfantisThm4} by ensuring the self-adjointness of $T$ and the equality $\Lambda(T)=\sigma(T)$ with no additional assumption.

\begin{thm} \label{thm-self}
Let $T_N$ be a subsequence of truncations of $T$. If there exists a sequence $x_N$ of normalized eigenvectors of $T_N$ with bounded eigenvalues and satisfying \eqref{gencond}, then $T$ is self-adjoint.
\end{thm}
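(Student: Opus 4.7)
The strategy is to show, by contradiction, that a single real number $\lambda$ can be forced to lie in the spectrum of \emph{every} self-adjoint extension of $T$; this clashes with the disjointness dichotomy recalled in the introduction and therefore forces $T$ itself to be self-adjoint.

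Since the eigenvalues $\lambda_N$ are bounded, Bolzano--Weierstrass provides a subsequence along which $\lambda_N\to\lambda$ for some $\lambda\in\mathbb{R}$, and restricting the $x_N$ to this subsequence preserves both the eigenvalue equation and the hypothesis \eqref{gencond}. Feeding \eqref{gencond} and $\lambda_N\to\lambda$ into identity \eqref{IfantisId3bis} yields $\|(T-\lambda)x_N\|\to 0$. Each $x_N$ lies in $H_N\subset\operatorname{span}\{e_n\}_{n=1}^\infty\subset\operatorname{dom}(T)$, and any self-adjoint extension $\tilde T\supseteq T$ agrees with $T$ on this common domain. Hence $(\tilde T-\lambda)x_N=(T-\lambda)x_N\to 0$ while $\|x_N\|=1$, so $\lambda$ lies in the approximate point spectrum of $\tilde T$ and, by self-adjointness of $\tilde T$, in $\sigma(\tilde T)$.

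Suppose now, for contradiction, that $T$ is not self-adjoint. The remarks in the introduction (Jacobi operators have equal deficiency indices, and by \cite[Theorem 4.11]{S1998} any two distinct self-adjoint extensions have pairwise disjoint spectra) then produce two extensions $\tilde T_1\neq\tilde T_2$ with $\sigma(\tilde T_1)\cap\sigma(\tilde T_2)=\emptyset$. The previous paragraph places $\lambda$ in both spectra, a contradiction. Hence $T$ is self-adjoint.

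The main obstacle here is conceptual rather than technical: no delicate estimate is needed beyond identity \eqref{IfantisId3bis}. The novelty — and what was missed in \cite{IKP2007} despite being within reach of those techniques — is the observation that \eqref{gencond}, previously used only to force $\Lambda(T)\subseteq\sigma(T)$, simultaneously produces an approximate eigenvector of $T$ at a real point that automatically survives in every self-adjoint extension. Once this is recognized, the pairwise-disjoint-spectra property closes the argument immediately.
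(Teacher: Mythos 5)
Your proof is correct and follows essentially the same route as the paper's: pass to a convergent subsequence of eigenvalues, use \eqref{IfantisId3bis} together with \eqref{gencond} to place $\lambda$ in the spectrum of every self-adjoint extension, and invoke the pairwise disjointness of the spectra of distinct self-adjoint extensions to conclude. The only difference is that you spell out explicitly why the extensions agree with $T$ on the vectors $x_N$, which the paper leaves implicit.
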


\begin{proof}
Suppose that $x_N$ are normalized eigenvectors of $T_N$ with bounded eigenvalues $\lambda_N$. We can assume without loss that $\lambda_N$ converges to some point $\lambda$ by restricting to a new subsequence if necessary. Then, \eqref{IfantisId3bis} holds not only for $T$, but also for every extension of $T$. As a consequence, \eqref{gencond} implies that the limit point $\lambda$ lies in the spectrum of any such extension. In particular, if $T$ is not self-adjoint, $\lambda$ must be a common point of the spectra of the infinitely many self-adjoint extensions of $T$. This is in contradiction with the fact that any two self-adjoint extensions have disjoint spectra \cite[Theorem 4.11]{S1998}. Therefore, $T$ must be self-adjoint.
\end{proof}

\begin{rem} \label{rem-interlace}
It is known that the eigenvalues of $T_N$ always interlace with those of $T_{N+1}$. Even more, the bounded interval defined by any pair of eigenvalues of $T_N$ includes an eigenvalue of $T_n$ for any $n>N$ (see for instance \cite[Chapter 1]{C1978}). This shows that the existence of a subsequence $T_N$ having a sequence of normalized eigenvectors $x_N$ with bounded eigenvalues is guaranteed for any symmetric tridiagonal operator $T$. Therefore, every condition on $a_n$ and $b_n$ implying \eqref{gencond} for any such sequence $x_N$ gives simultaneously the inclusion $\Lambda(T)\subseteq\sigma(T)$ and the self-adjointness of $T$, leading to the equality $\Lambda(T)=\sigma(T)$.
\end{rem}

\begin{cor}
If $T$ is not self-adjoint, then
 \begin{equation}
 \liminf_{n\to\infty}
 \frac{a_n^2 p_n(\lambda_n)^2}{\sum_{k=1}^n p_k(\lambda_n)^2} > 0
 \label{gencond-OP}
 \end{equation}
for any bounded sequence $\lambda_n$ with $p_{n+1}(\lambda_n)=0$, where $p_n(x)$ are the orthogonal polynomials given by \eqref{Intro_OPRR}.
\end{cor}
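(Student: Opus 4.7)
The plan is to prove this by contraposition: assume the liminf is zero for some bounded zero-sequence $\lambda_n$ of $p_{n+1}$, and then deduce that $T$ must be self-adjoint via Theorem \ref{thm-self}.

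First I would record the explicit form of the eigenvectors of $T_n$. The recurrence \eqref{Intro_OPRR} combined with $p_{n+1}(\lambda_n)=0$ shows that the vector $v_n = \sum_{k=1}^n p_k(\lambda_n) e_k$ satisfies $T_n v_n = \lambda_n v_n$, so $\lambda_n$ is an eigenvalue of $T_n$ and the normalized eigenvector is
\[
x_n = \frac{1}{\sqrt{\sum_{k=1}^n p_k(\lambda_n)^2}}\,\sum_{k=1}^n p_k(\lambda_n)\, e_k.
\]
In particular, $(x_n,e_n) = p_n(\lambda_n)/\sqrt{\sum_{k=1}^n p_k(\lambda_n)^2}$, so that
\[
a_n^2\,|(x_n,e_n)|^2 = \frac{a_n^2\, p_n(\lambda_n)^2}{\sum_{k=1}^n p_k(\lambda_n)^2}.
\]

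Now suppose, contrary to the statement, that there is a bounded sequence $\lambda_n$ with $p_{n+1}(\lambda_n)=0$ for which the liminf in \eqref{gencond-OP} vanishes. Passing to a subsequence, I can assume both that the expression displayed above tends to $0$ (which is precisely condition \eqref{gencond} applied to $x_n$) and that $\lambda_n$ converges in $\mathbb{R}$ by boundedness. The $x_n$ are then a sequence of normalized eigenvectors of the truncations $T_n$ with bounded (convergent) eigenvalues satisfying \eqref{gencond}, so Theorem \ref{thm-self} forces $T$ to be self-adjoint, contradicting the hypothesis.

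I don't expect a real obstacle here; the statement is essentially a transcription of Theorem \ref{thm-self} into the language of orthogonal polynomials. The only care needed is (i) to correctly identify the $n$-th coefficient of the normalized eigenvector as $p_n(\lambda_n)/\sqrt{\sum_{k\le n}p_k(\lambda_n)^2}$, and (ii) to justify the subsequence extraction — boundedness of $\lambda_n$ together with vanishing liminf gives a common subsequence along which both \eqref{gencond} holds and $\lambda_n$ converges, which is exactly what Theorem \ref{thm-self} requires.
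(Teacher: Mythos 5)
Your proposal is correct and follows essentially the same route as the paper: identify the normalized eigenvector of $T_n$ as $\bigl[\sum_{k=1}^n p_k(\lambda_n)^2\bigr]^{-1/2}\sum_{k=1}^n p_k(\lambda_n)e_k$, note that failure of \eqref{gencond-OP} yields a subsequence along which \eqref{gencond} holds with bounded eigenvalues, and invoke Theorem \ref{thm-self} to contradict non-self-adjointness. The only cosmetic difference is that you additionally pass to a convergent subsequence of $\lambda_n$, a step the paper leaves inside the proof of Theorem \ref{thm-self} itself.
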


\begin{proof}
First of all, it is known that the eigenvalues $\lambda_N$ of $T_N$ are the zeros of $p_{N+1}(x)$, with $\sum_{k=1}^N p_k(\lambda_N) e_k$ as eigenvectors, as follows directly from \eqref{Intro_OPRR} \cite{S1932}. If \eqref{gencond-OP} fails, there exists a bounded subsequence $\lambda_N$ of zeros of $p_{N+1}(x)$ such that \eqref{gencond} holds for $x_N=\big[\sum_{k=1}^N p_k(\lambda_N)^2\big]^{-1/2}\sum_{k=1}^N p_k(\lambda_N) e_k$. According to Theorem \ref{thm-self}, $T$ must be self-adjoint because $x_N$ are normalized eigenvectors of $T_N$ with bounded eigenvalues $\lambda_N$.
\end{proof}

Let us describe now the procedure to obtain \eqref{IfantisThm1}--\eqref{IfantisThm4} in such a way that it can be iterated to generate infinitely many other sufficient conditions for the inclusion $\Lambda(T)\subseteq\sigma(T)$. This will also help to prove that these conditions actually yield the equality $\Lambda(T)=\sigma(T)$ because they ensure that $T$ is self-adjoint.
The sketch of the referred procedure is as follows:

 \begin{itemize}

 \item
Write in coordinates the eigenvalue equation in (\ref{Pre_BasicEigen}), i.e.
 \begin{equation}
 \begin{aligned}
 & (\lambda_N-b_k)\delta_k=a_k\delta_{k+1}+a_{k-1}\delta_{k-1},
 \\
 & \delta_k=\delta_{N,k}=(x_N,e_k),
 \end{aligned}
 \qquad
 k=1,\ldots,N,
 \label{IfantisId2}
 \end{equation}
where we use the convention $\delta_0=\delta_{N+1}=0$.

 \item
Use the last $m$ equations of \eqref{IfantisId2} and $|\delta_k|\le\|x_N\|=1$ to find a bound for $\delta_N=(x_N,e_N)$ depending only on the eigenvalue $\lambda_N$ and the last $m$ coefficients $a_{N-k-1}$, $b_{N-k}$, $k=0,\dots,m-1$, of the truncation $T_N$, i.e.
 \begin{equation}
 |\delta_N| \le F_{m,N} =
 F_{m,N}(\lambda_N;b_N,a_{N-1},\dots,b_{N-m+1},a_{N-m}).
 \label{IfantisId4}
 \end{equation}

 \item
Give an asymptotic condition for $a_n$ and $b_n$ which ensures that
 \begin{equation}
 \lim_{N\to\infty}a_NF_{m,N}=0
 \label{IfantisId5}
 \end{equation}
when $\lambda_N$ is bounded.

 \end{itemize}

The asymptotic conditions for $a_n$ and $b_n$ found with the above procedure imply \eqref{gencond} for any sequence $x_N$ of eigenvectors of $T_N$ with bounded eigenvalues. Therefore, from Theorem \ref{thm-self} and Remark \ref{rem-interlace} we conclude that these conditions are sufficient not only for the inclusion $\Lambda(T)\subseteq\sigma(T)$, but also for the equality $\Lambda(T)=\sigma(T)$ and the self-adjointness of $T$.

The bound $F_{m,N}$ in (\ref{IfantisId4}) is deduced from the repetitive use of (\ref{IfantisId2}) for various values of $k$. Since the qualitative expression of $F_{m,N}$ will be needed later on, it is convenient to show the procedure leading to $F_{m,N}$ for the first values of $m$, introducing at the same time a notation which will make easier the transition to a general
index $m$. Denoting
 \begin{equation}
 c_n^- = c_n^-(\lambda_N) = \frac{a_{n-1}}{\lambda_N-b_n},
 \qquad
 c_n^+ = c_n^+(\lambda_N) = \frac{a_n}{\lambda_N-b_n},
 \label{c}
 \end{equation}
equations \eqref{IfantisId2}, ordered from the last to the first one, read as
 \begin{equation}
 \begin{aligned}
 & [N] & & \delta_N = c_N^-\delta_{N-1},
 \\
 & [N-1] & & \delta_{N-1} = c_{N-1}^-\delta_{N-2} + c_{N-1}^+\delta_N,
 \\
 & [N-2] & & \delta_{N-2} = c_{N-2}^-\delta_{N-3} + c_{N-2}^+\delta_{N-1},
 \\
 & & & \dots\dots\dots\dots\dots\dots\dots\dots\dots\dots
 \\
 & [N-k] & & \delta_{N-k} = c_{N-k}^-\delta_{N-k-1} + c_{N-k}^+\delta_{N-k+1},
 \\
 & & & \dots\dots\dots\dots\dots\dots\dots\dots\dots\dots
 \\
 & [1] & & \delta_1 = c_1^+\delta_{2}.
 \end{aligned}
 \label{IfantisId2bis}
 \end{equation}
Note that equation $[N-k]$ of \eqref{IfantisId2bis} requires $\lambda_N\ne b_{N-k}$. This will not be a problem because we will be interested in the limit $N\to\infty$ and we will only deal with the case $\lim_{n\to\infty}|b_n|=\infty$, which implies that $\lambda_N\ne b_{N-k}$ for $N$ big enough and fixed $k$ whenever $\lambda_N$ is bounded.

\smallskip

\noindent \underline{The bound $F_{0,N}$}

Using no equation of \eqref{IfantisId2bis} gives $F_{0,N}=1$, which leads to \eqref{IfantisThm1} due to Carleman's self-adjointness condition \eqref{CAR} \cite{C1923} (see also \cite[Chapter VII]{B1968}).

\smallskip

\noindent \underline{The bound $F_{1,N}$}

Equation $[N]$ of \eqref{IfantisId2bis} yields $F_{1,N} = |c_N^-|$. This gives
\eqref{IfantisThm2} because, due to the divergence of $b_N$ and the boundedness of $\lambda_N$, we can ensure that $\lim_{N\to\infty}|\lambda_N-b_N|/b_N=1$ so that $|\lambda_N-b_N|$ can be substituted by $b_N$ when imposing $\lim_{N\to\infty}a_NF_{1,N}=0$.

\smallskip

\noindent \underline{The bound $F_{2,N}$}

Inserting equation $[N-1]$ into equation $[N]$ leads to
 \begin{equation}
 \delta_N = c_N^-c_{N-1}^-\delta_{N-2} + c_N^-c_{N-1}^+\delta_N.
 \label{m=2}
 \end{equation}
Thus we can take $F_{2,N} = |c_N^-c_{N-1}^-|+|c_N^-c_{N-1}^+|$, and $\lim_{N\to\infty}a_NF_{2,N}=0$ is equivalent to  $\lim_{N\to\infty}a_N|c_N^-c_{N-1}^-|=\lim_{N\to\infty}a_N|c_N^-c_{N-1}^+|=0$. This ends in \eqref{IfantisThm3} when substituting in both asymptotic conditions the factors $|\lambda_N-b_N|$ and $|\lambda_N-b_{N-1}|$ by the equivalent ones $b_N$ and $b_{N-1}$.

\smallskip

\noindent \underline{The bound $F_{3,N}$}

Now we introduce equations $[N-2]$ and $[N]$ into \eqref{m=2} obtaining
\begin{equation}
 \delta_N = c_N^-c_{N-1}^-c_{N-2}^-\delta_{N-3}
 + c_N^-c_{N-1}^-c_{N-2}^+\delta_{N-1}
 + c_N^-c_{N-1}^+c_N^-\delta_{N-1}.
 \label{m=3}
 \end{equation}
This gives $F_{3,N}=|c_N^-c_{N-1}^-c_{N-2}^-|
 + |c_N^-c_{N-1}^-c_{N-2}^+|
 + |c_N^-c_{N-1}^+c_N^-|$, from which \eqref{IfantisThm4} is obtained analogously to the previous cases.

\smallskip

Reference \cite{IKP2007} stops the procedure at this stage, but it is clear that it can continue indefinitely providing infinitely many conditions for  $\Lambda(T)\subseteq\sigma(T)$. For instance, the next bound and sufficient conditions are shown below.

\smallskip

\noindent \underline{The bound $F_{4,N}$}

Inserting equations $[N-3]$ and $[N-1]$ into \eqref{m=3} yields
 \begin{equation}
 \begin{aligned}
 \kern-7pt \text{\footnotesize $\delta_N$} \,
 & \text{\footnotesize
 $= c_N^-c_{N-1}^-c_{N-2}^-c_{N-3}^-\delta_{N-4}
 + c_N^-c_{N-1}^-c_{N-2}^-c_{N-3}^+\delta_{N-2}
 + c_N^-c_{N-1}^-c_{N-2}^+c_{N-1}^-\delta_{N-2}$}
 \\
 & \kern10pt \text{\footnotesize
 $+ \; c_N^-c_{N-1}^-c_{N-2}^+c_{N-1}^+\delta_N
 + c_N^-c_{N-1}^+c_N^-c_{N-1}^-\delta_{N-2}
 + c_N^-c_{N-1}^+c_N^-c_{N-1}^+\delta_N.$}
 \label{m=4}
 \end{aligned}
 \end{equation}
The bound $F_{4,N}=$ {\footnotesize$|c_N^-c_{N-1}^-c_{N-2}^-c_{N-3}^-|
 + |c_N^-c_{N-1}^-c_{N-2}^-c_{N-3}^+|
 + |c_N^-c_{N-1}^-c_{N-2}^+c_{N-1}^-|
 + |c_N^-c_{N-1}^-c_{N-2}^+c_{N-1}^+|
 + |c_N^-c_{N-1}^+c_N^-c_{N-1}^-|
 + |c_N^-c_{N-1}^+c_N^-c_{N-1}^+|$} leads to a new condition which, together with $\lim_{n\to\infty}b_n=\infty$, guarantees $\Lambda(T)\subseteq\sigma(T)$, namely,
 \begin{equation}
 \lim_{n\to\infty}
 \text{\footnotesize $\frac{a_n a_{n-1}}{b_nb_{n-1}}
 \left[
 \left(\frac{a_{n-1}^{2}}{b_n}+\frac{a_{n-2}^{2}}{b_{n-2}}\right)
 \frac{a_{n-1}+a_{n-2}}{b_{n-1}}
 +\frac{a_{n-2}a_{n-3}(a_{n-3}+a_{n-4})}{b_{n-2}b_{n-3}}\right]$}
 =0.
 \label{new}
 \end{equation}

The inclusion $\Lambda(T)\subseteq\sigma(T)$ remains true assuming that $\lim_{n\to\infty}|b_n|=\infty$ and substituting $b_n$ by $|b_n|$ in conditions \eqref{IfantisThm2}--\eqref{IfantisThm4} and \eqref{new}. The reason for this is that $\lim_{N\to\infty}|\lambda_N-b_{N-k}|/|b_{N-k}|=1$ under the divergence of $|b_n|$, thus $|\lambda_N-b_{N-k}|$ can be substituted by $|b_{N-k}|$ in \eqref{IfantisId5}.
Moreover, Theorem~\ref{thm-self} and Remark~\ref{rem-interlace} prove that these conditions guarantee the self-adjointness of $T$, so that actually they imply that $\Lambda(T)=\sigma(T)$.
Therefore, we have found the following improvement of the results in \cite{IKP2007}.

\begin{thm} \label{Ifantis-update}
Let $(\mathfrak{B}_1)$--$(\mathfrak{B}_4)$ be the conditions obtained respectively from \eqref{IfantisThm2}--\eqref{IfantisThm4} and \eqref{new} when substituting $b_n$ by $|b_n|$.
If $\lim_{n\to\infty}|b_n|=\infty$, any of the conditions $(\mathfrak{B}_1)$--$(\mathfrak{B}_4)$ implies that $T$ is self-adjoint and $\Lambda(T)=\sigma(T)$.
\end{thm}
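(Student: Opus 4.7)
The proof proposal is basically to assemble the ingredients already laid out in the discussion preceding the statement. My plan is as follows.

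First I would take an arbitrary subsequence of truncations $T_N$ with normalized eigenvectors $x_N$ and bounded eigenvalues $\lambda_N$; such a subsequence exists by the interlacing argument of Remark \ref{rem-interlace}. I would then pass, without loss, to a further subsequence along which $\lambda_N\to\lambda\in\mathbb{R}$. Because $|b_n|\to\infty$ while $\lambda_N$ stays bounded, for each fixed $k$ and all $N$ large enough the denominators $\lambda_N-b_{N-k}$ are nonzero, and moreover $|\lambda_N-b_{N-k}|/|b_{N-k}|\to 1$. This is what makes the coefficients $c^\pm$ in \eqref{c} well-defined in the limit and allows $|\lambda_N-b_{N-k}|$ to be freely replaced by $|b_{N-k}|$ in all asymptotic statements.

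Next, for each $m\in\{1,2,3,4\}$, I would write $|\delta_N|\le F_{m,N}$ by iterating the system \eqref{IfantisId2bis} exactly as done in the derivations of $F_{1,N}$, $F_{2,N}$, $F_{3,N}$, $F_{4,N}$ above, and applying the trivial bound $|\delta_k|\le\|x_N\|=1$ to every surviving $\delta$-factor (including, where relevant, the $\delta_N$ terms appearing on the right-hand sides of \eqref{m=2} and \eqref{m=4}). Multiplying by $a_N$ and grouping the factors, the quantity $a_N F_{m,N}$ is a finite sum whose generic term has exactly the shape of the left-hand side of \eqref{IfantisThm2}, \eqref{IfantisThm3}, \eqref{IfantisThm4}, \eqref{new}, up to the replacement of each $b_{N-k}$ by $|\lambda_N-b_{N-k}|$. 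Using the asymptotic equivalence $|\lambda_N-b_{N-k}|\sim|b_{N-k}|$ from the previous paragraph, one sees that condition $(\mathfrak{B}_m)$ is exactly what is needed to obtain $\lim_{N\to\infty}a_N F_{m,N}=0$, whence
\[
\lim_{N\to\infty}a_N|(x_N,e_N)|=\lim_{N\to\infty}a_N|\delta_N|=0,
\]
which is precisely the hypothesis \eqref{gencond} of Theorem \ref{thm-self}.

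At this point the two conclusions follow at once. Theorem \ref{thm-self} yields that $T$ is self-adjoint, so that $\Lambda(T)\supseteq\sigma(T)$ as noted in the Introduction. On the other hand, by \eqref{IfantisId3bis} and $\lim_N a_N\delta_N=0$ we also have $\lim_N\|(T-\lambda)x_N\|=0$, forcing $\lambda\in\sigma(T)$; since this applies to any limit point of any sequence of eigenvalues of the $T_N$'s, we obtain $\Lambda(T)\subseteq\sigma(T)$, and hence the equality $\Lambda(T)=\sigma(T)$.

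The only mildly delicate point is the handling of the $\delta_N$ terms that reappear on the right-hand sides of \eqref{m=2} and \eqref{m=4} during the iteration: a naive reading might suggest one needs to "solve for $\delta_N$", but in fact the argument is simpler, since bounding those $\delta_N$ by $1$ is already enough, provided one verifies that the corresponding coefficient contributes a term of the form already covered by $(\mathfrak{B}_2)$ or $(\mathfrak{B}_4)$. Once this bookkeeping is checked, everything else is a routine translation between the $c_n^\pm$ notation and the limit conditions stated in the theorem.
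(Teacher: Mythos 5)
Your proposal is correct and follows essentially the same route as the paper: iterate the eigenvalue equations to get $|\delta_N|\le F_{m,N}$, use $|\lambda_N-b_{N-k}|\sim|b_{N-k}|$ to translate $(\mathfrak{B}_m)$ into $\lim_N a_NF_{m,N}=0$, and then invoke \eqref{IfantisId3bis} for the inclusion $\Lambda(T)\subseteq\sigma(T)$ together with Theorem \ref{thm-self} and Remark \ref{rem-interlace} for self-adjointness and the reverse inclusion. Your remark about bounding the reappearing $\delta_N$ terms by $1$ rather than solving for $\delta_N$ is exactly how the paper defines $F_{2,N}$ and $F_{4,N}$, so no gap there.
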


As it is pointed out in \cite{IKP2007}, none of the conditions \eqref{IfantisThm2} or \eqref{IfantisThm3} is weaker than the other. Indeed, we will see that this also holds for $(\mathfrak{B}_m)$, $1 \le m \le 4$, and in general for the conditions obtained from any bound $F_{m,N}$, which become complementary (see \S\ref{Comparisons}). Therefore, the sufficient conditions obtained from all the bounds $F_{m,N}$ are in principle of equal interest.

It can be argued that the results for large values of $m$ are of doubtful utility because the complexity of the sufficient conditions grows quickly as $m$ gets bigger. Nevertheless, as we will see in \S\ref{Comparisons}, this does not prevent from applying succesfully these infinitely many conditions to concrete examples. 

Indeed, \S\ref{GmC} shows that it is possible to extract simple but quite general consequences of interest (see Theorem \ref{weak}) from the whole set of complicated statements that appear for all the values of $m$. To understand the idea in a simple setting, we will first explain it using Theorem \ref{Ifantis-update}. The expressions involved in ($\mathfrak{B}_1$)--($\mathfrak{B}_3$) can be split as
 \[
 \begin{array}{l}
 \frac{a_na_{n-1}}{|b_n|}
 = \frac{a_n}{|b_n|^{1/2}} \frac{a_{n-1}}{|b_n|^{1/2}},
 \smallskip \\
 \frac{a_n a_{n-1}(a_{n-1}+a_{n-2})}{|b_n| |b_{n-1}|}
 = \frac{a_n}{|b_n|^{2/3}}
 \left(\frac{a_{n-1}}{|b_n|^{2/3}}\right)^{\scriptscriptstyle\kern-2pt 1/2}
 \left[
 \left(\frac{a_{n-1}}{|b_{n-1}|^{2/3}}\right)^{\scriptscriptstyle\kern-2pt 3/2}
 + \left(\frac{a_{n-1}}{|b_{n-1}|^{2/3}}\right)^{\scriptscriptstyle\kern-2pt 1/2}
 \frac{a_{n-2}}{|b_{n-1}|^{2/3}}
 \right],
 \smallskip \\
 \frac{a_n a_{n-1}}{|b_n||b_{n-1}|}
 \left[
 \frac{a_{n-1}^{2}}{|b_n|}+\frac{a_{n-2}(a_{n-2}+a_{n-3})}{|b_{n-2}|}
 \right]
 = \frac{a_n}{|b_n|^{3/4}}
 \left(\frac{a_{n-1}}{|b_n|^{3/4}}\right)^{\scriptscriptstyle\kern-2pt 1/3}
 \left\{
 \left(\frac{a_{n-1}}{|b_{n-1}|^{3/4}}\right)^{\scriptscriptstyle\kern-2pt 4/3}
 \left(\frac{a_{n-1}}{|b_n|^{3/4}}\right)^{\scriptscriptstyle\kern-2pt 4/3}
 \right.
 \smallskip \\ \kern162pt
 + \left(\frac{a_{n-1}}{|b_{n-1}|^{3/4}}\right)^{\scriptscriptstyle\kern-2pt 2/3}
 \left[
 \left(\frac{a_{n-2}}{|b_{n-1}|^{3/4}}\right)^{\scriptscriptstyle\kern-2pt 2/3}
 \left(\frac{a_{n-2}}{|b_{n-2}|^{3/4}}\right)^{\scriptscriptstyle\kern-2pt 4/3}
 \right.
 \smallskip \\ \kern160pt
 \left.\left.
 + \left(\frac{a_{n-2}}{|b_{n-1}|^{3/4}}\right)^{\scriptscriptstyle\kern-2pt 2/3}
 \left(\frac{a_{n-2}}{|b_{n-2}|^{3/4}}\right)^{\scriptscriptstyle\kern-2pt 1/3}
 \frac{a_{n-3}}{|b_{n-2}|^{3/4}}
 \right]
 \right\},
 \end{array}
 \]
while the analogous expression in ($\mathfrak{B}_4$) can be written as a sum of terms which are products of positive powers of
 \[
 \frac{a_{n-k}}{|b_{n-k}|^{4/5}},
 \quad
 \frac{a_{n-k-1}}{|b_{n-k}|^{4/5}},
 \quad k=0,1,2,3.
 \]
This splitting shows that
 \[
 a_n, a_{n-1} = o(|b_n|^\frac{m}{m+1})
 \; \Rightarrow \;
 \eqref{eq-strong}, \qquad m=1,2,3,4,
 \]
where the usual notation $y_n=o(z_n)$ stands for $\lim_{n\to\infty}y_n/z_n=0$.

Therefore, a weaker but much simpler version of Theorem \ref{Ifantis-update} states that $T$ is self-adjoint and $\Lambda(T)=\sigma(T)$ provided that $|b_n|$ diverges and $a_n, a_{n-1} = o(|b_n|^\frac{m}{m+1})$ for some of the values $m=1,2,3,4$. Actually, in contrast to conditions \eqref{eq-strong}, the simpler ones $a_n, a_{n-1} = o(|b_n|^\frac{m}{m+1})$ are not complementary, but they become weaker as $m$ gets bigger. This means that the weak version of Theorem \ref{Ifantis-update} can be summarized by the single result for the biggest value $m=4$,
 \[
 \left\{ \kern-3pt
 \begin{array}{l}
 {\displaystyle\lim_{n\to\infty}}|b_n|=\infty
 \\
 a_n, a_{n-1} = o(|b_n|^{4/5})
 \end{array}
 \right.
 \Rightarrow\; T \text{ is self-adjoint and } \Lambda(T)=\sigma(T).
 \]

This suggests that, in the weak version, it should be possible to enclose the information given by the results for all the values of $m$ into a single statement. Such a statement is among the objectives of the next section, which is devoted to the extension of the previous results to any index $m$.

 \section{General $m$-conditions for $\Lambda(T)=\sigma(T)$ and self-adjointness}
 \label{GmC}
 \setcounter{equation}{0}
 \renewcommand{\theequation}{\thesection.\arabic{equation}}
 \setcounter{thm}{0}
 \renewcommand{\thethm}{\thesection.\arabic{thm}}
 \par
To deal with the bounds $F_{m,N}$ for any value of $m$ we first need an expression for $\delta_N$ generalizing \eqref{m=2}, \eqref{m=3} and \eqref{m=4}, i.e. an expression obtained using recursively the last $m$ equations of \eqref{IfantisId2}. For this purpose we introduce the multi-indices
$\boldsymbol{j}_m=(j_1,j_2,\dots,j_m)$, $j_s\in\mathbb{Z}$, and the sets
 \begin{equation}
 \begin{aligned}
 & \mathcal{I}_m
 = \{(\boldsymbol{j}_m|\boldsymbol{k}_m) \, :
 \, j_1=0, \;
 k_s=j_s=j_{s+1}+1 \, \text{ or } \, k_s=j_s+1=j_{s+1}\},
 \\
 & \widehat{\mathcal{I}}_m
 = \{(\boldsymbol{j}_{m+1}|\boldsymbol{k}_m) \, :
 \, j_1=0, \;
 k_s=j_s=j_{s+1}+1 \, \text{ or } \, k_s=j_s+1=j_{s+1}\},
 \\
 & \mathcal{I}_m^+
 = \{(\boldsymbol{j}_m|\boldsymbol{k}_m) \in \mathcal{I}_m \, :
 \, j_s\ge0, \; k_s\ge1\},
 \\
 & \widehat{\mathcal{I}}_m^+
 = \{(\boldsymbol{j}_{m+1}|\boldsymbol{k}_m) \in \widehat{\mathcal{I}}_m \, :
 \, j_s\ge0, \; k_s\ge1\}.
 \end{aligned}
 \label{indices}
 \end{equation}
Using this notation we have the following result.

\begin{prop} \label{deltaN}
For any $m\in\{1,2,\dots,N\}$, the solutions $\delta_k$ of \eqref{IfantisId2} satisfy
 \begin{equation}
 \delta_N = \kern-10pt \sum_{
 (\boldsymbol{j}_{m+1}|\boldsymbol{k}_m)\in\widehat{\mathcal{I}}_m^+}
 \frac{a_{N-k_1}}{\lambda_N-b_{N-j_1}} \frac{a_{N-k_2}}{\lambda_N-b_{N-j_2}}
 \cdots \frac{a_{N-k_m}}{\lambda_N-b_{N-j_m}} \, \delta_{N-j_{m+1}},
 \label{eq-deltaN}
 \end{equation}
provided that $\lambda_N\neq b_{N-j}$ for $j=0,1,\dots,m-1$.
\end{prop}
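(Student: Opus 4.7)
My plan is to prove \eqref{eq-deltaN} by induction on $m$, carrying out in a combinatorially organized way the recursive substitution procedure that already produced the formulas \eqref{m=2}--\eqref{m=4} for $m=2,3,4$. The key observation is that $\widehat{\mathcal{I}}_m^+$ is in bijection with the set of walks of length $m$ on the non-negative integers starting at $j_1=0$: a ``down'' step $j_s\to j_{s+1}=j_s+1$ corresponds to $k_s=j_s+1=j_{s+1}$ and yields the factor $c_{N-j_s}^-=\frac{a_{N-j_s-1}}{\lambda_N-b_{N-j_s}}$, while an ``up'' step $j_s\to j_{s+1}=j_s-1$ corresponds to $k_s=j_s=j_{s+1}+1$ and yields $c_{N-j_s}^+=\frac{a_{N-j_s}}{\lambda_N-b_{N-j_s}}$; in both cases the resulting factor equals $\frac{a_{N-k_s}}{\lambda_N-b_{N-j_s}}$, as required in \eqref{eq-deltaN}. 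Each term on the right-hand side of \eqref{eq-deltaN} is therefore the product of coefficients accumulated along such a walk, multiplied by the terminal value $\delta_{N-j_{m+1}}$.

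The base case $m=1$ is equation $[N]$ from \eqref{IfantisId2bis}, whose unique admissible walk $(j_1,j_2\,|\,k_1)=(0,1\,|\,1)$ gives $\delta_N=c_N^-\delta_{N-1}$, matching \eqref{eq-deltaN}. For the inductive step, assume \eqref{eq-deltaN} for some $m$ and process each summand according to its terminal position $j:=j_{m+1}$. If $j\ge 1$, equation $[N-j]$ of \eqref{IfantisId2bis} expands $\delta_{N-j}$ into $c_{N-j}^-\delta_{N-j-1}+c_{N-j}^+\delta_{N-j+1}$, splitting the summand into two walks of length $m+1$ obtained by appending either a down step ($k_{m+1}=j+1$, $j_{m+2}=j+1$) or an up step ($k_{m+1}=j$, $j_{m+2}=j-1$); both extensions satisfy the constraints $k_{m+1}\ge 1$ and $j_{m+2}\ge 0$. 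If $j=0$, equation $[N]$ reads $\delta_N=c_N^-\delta_{N-1}$, i.e.\ only the down extension appears; the hypothetical ``up'' extension is exactly the one forbidden by the $+$ constraints (it would force $j_{m+2}=-1$) and it encodes the vanishing boundary contribution $c_N^+\delta_{N+1}=0$ already absorbed into the convention $\delta_{N+1}=0$. Tallying all resulting summands reproduces \eqref{eq-deltaN} with $m$ replaced by $m+1$.

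The main subtlety I anticipate is verifying that the $+$ constraints $j_s\ge 0$ and $k_s\ge 1$ encode exactly the boundary effects in play, namely the absence of the up step from $j=0$ (due to $\delta_{N+1}=0$) and the fact that no $a_0$-type factors appear in admissible walks when $m<N$. The assumption $\lambda_N\ne b_{N-j}$ for $j=0,1,\ldots,m-1$ enters only to guarantee that each equation $[N-j]$ invoked during the induction is well-defined; this range is sharp because walks visit only positions $j_s\le s-1\le m-1$ before the $s$-th substitution. Beyond this bookkeeping, the argument is a purely algebraic identity and requires no analytic input.
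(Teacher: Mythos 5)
Your proof is correct and follows essentially the same route as the paper's: induction on $m$ with base case given by equation $[N]$ (where $\widehat{\mathcal{I}}_1^+=\{(0,1|1)\}$), and an inductive step that substitutes equation $[N-j_{m+1}]$ into each summand, with the boundary term at $j_{m+1}=0$ killed by the convention $\delta_{N+1}=0$, which is exactly what the constraints $j_s\ge0$, $k_s\ge1$ defining $\widehat{\mathcal{I}}_m^+$ encode. The walk-on-$\mathbb{Z}_{\ge0}$ interpretation is a helpful way to organize the same bookkeeping, and your accounting of why the hypothesis $\lambda_N\ne b_{N-j}$ is needed only for $j=0,\dots,m-1$ matches the paper's observation that $0\le j_{m+1}\le m$.
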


\begin{proof}
Let us proceed by induction on $m$. Equation $[N]$ of \eqref{IfantisId2bis} is directly the result for $m=1$ because $\widehat{\mathcal{I}}_1^+=\{(0,1|1)\}$.

Assume now \eqref{eq-deltaN} for an index $m<N$. Then, $0 \le j_{m+1} \le m$ for each element of the set $\widehat{\mathcal{I}}_m^+$, so that $\lambda_N \ne b_{N-j_{m+1}}$ under the hypothesis of the theorem. Since $N \ge N-j_{m+1} \ge N-m > 1$, it makes sense to use equation $[N-j_{m+1}]$ of \eqref{IfantisId2bis}. Inserting it into each summand of \eqref{eq-deltaN} and using the convention $\delta_{N+1}=0$ gives
 \[
 \begin{aligned}
 \delta_N & = \kern-15pt \sum_{
 \substack{(\boldsymbol{j}_{m+1}|\boldsymbol{k}_m)\in\widehat{\mathcal{I}}_m^+
 \\
 k_{m+1}=j_{m+1}=j_{m+2}+1
 \smallskip \\
 \text{or}
 \\
 k_{m+1}=j_{m+1}+1=j_{m+2}
 }}
 \frac{a_{N-k_1}}{\lambda_N-b_{N-j_1}}
 \cdots \frac{a_{N-k_m}}{\lambda_N-b_{N-j_m}}
 \frac{a_{N-k_{m+1}}}{\lambda_N-b_{N-j_{m+1}}}
 \, \delta_{N-j_{m+2}}
 \\
 & = \kern-10pt \sum_{
 (\boldsymbol{j}_{m+2}|\boldsymbol{k}_{m+1})\in\widehat{\mathcal{I}}_{m+1}^+
 }
 \frac{a_{N-k_1}}{\lambda_N-b_{N-j_1}}
 \cdots \frac{a_{N-k_m}}{\lambda_N-b_{N-j_m}}
 \frac{a_{N-k_{m+1}}}{\lambda_N-b_{N-j_{m+1}}}
 \, \delta_{N-j_{m+2}},
 \end{aligned}
 \]
which proves the result for the index $m+1$.
\end{proof}

As a direct consequence of the previous proposition, we find a general expression for the bound $F_{m,N}$.

\begin{prop} \label{bound}
Given $m\in\mathbb{N}$, for any $N \ge m$, the $N$-th coordinate $\delta_N$ of the normalized eigenvector of $T_N$ with eigenvalue $\lambda_N$ is bounded by
 \begin{equation}
 F_{m,N} = \kern-7pt
 \sum_{(\boldsymbol{j}_m|\boldsymbol{k}_m)\in\mathcal{I}_m^+}
 \frac{a_{N-k_1}}{|\lambda_N-b_{N-j_1}|} \frac{a_{N-k_2}}{|\lambda_N-b_{N-j_2}|}
 \cdots \frac{a_{N-k_m}}{|\lambda_N-b_{N-j_m}|},
 \label{eq-bound}
 \end{equation}
provided that $\lambda_N\neq b_{N-j}$ for $j=0,1,\dots,m-1$.
\end{prop}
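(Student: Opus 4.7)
The plan is to obtain \eqref{eq-bound} directly from Proposition \ref{deltaN} by taking absolute values. Applying the triangle inequality to the expansion \eqref{eq-deltaN} and using the elementary estimate $|\delta_k|=|(x_N,e_k)|\le\|x_N\|=1$, which holds for every $k$ because $x_N$ is normalized, yields
\[
|\delta_N| \le \sum_{(\boldsymbol{j}_{m+1}|\boldsymbol{k}_m)\in\widehat{\mathcal{I}}_m^+}
\frac{a_{N-k_1}}{|\lambda_N-b_{N-j_1}|}\cdots\frac{a_{N-k_m}}{|\lambda_N-b_{N-j_m}|}.
\]

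It then remains to identify this sum with the expression $F_{m,N}$ in \eqref{eq-bound}. Since each summand depends only on $j_1,\ldots,j_m$ and not on $j_{m+1}$, the natural move is to sum first over $j_{m+1}$, i.e.~to pass through the forgetful map $(\boldsymbol{j}_{m+1}|\boldsymbol{k}_m)\mapsto(\boldsymbol{j}_m|\boldsymbol{k}_m)$ from $\widehat{\mathcal{I}}_m^+$ to $\mathcal{I}_m^+$. The key combinatorial fact I would verify is that this map is a bijection. Indeed, the defining relation of $\widehat{\mathcal{I}}_m$ at index $s=m$ reads either $k_m=j_m=j_{m+1}+1$ or $k_m=j_m+1=j_{m+1}$, so $j_{m+1}\in\{k_m-1,k_m\}$ and its value is uniquely determined by $(j_m,k_m)$. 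Moreover, the extra positivity constraint $j_{m+1}\ge 0$ present in $\widehat{\mathcal{I}}_m^+$ but absent from $\mathcal{I}_m^+$ is automatic from $k_m\ge 1$, since in both cases one checks $j_{m+1}\ge 0$. This identification converts the sum over $\widehat{\mathcal{I}}_m^+$ into the sum over $\mathcal{I}_m^+$ with no multiplicity lost or gained, producing exactly $F_{m,N}$.

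There is no serious obstacle here; the argument is essentially bookkeeping with the index sets. The only subtle point worth being explicit about is the bijectivity of the forgetful projection between $\widehat{\mathcal{I}}_m^+$ and $\mathcal{I}_m^+$, so as to guarantee that the passage from \eqref{eq-deltaN} to \eqref{eq-bound} neither double-counts nor omits any term. Finally, the hypothesis $\lambda_N\ne b_{N-j}$ for $j=0,1,\ldots,m-1$ together with the assumption $N\ge m$ is inherited directly from Proposition \ref{deltaN}, ensuring that all denominators are well defined throughout the argument.
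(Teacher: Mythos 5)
Your proposal is correct and follows exactly the route the paper intends: the paper states this proposition as a direct consequence of Proposition \ref{deltaN}, i.e.\ take absolute values in \eqref{eq-deltaN}, bound $|\delta_{N-j_{m+1}}|\le\|x_N\|=1$, and note that forgetting $j_{m+1}$ identifies $\widehat{\mathcal{I}}_m^+$ with $\mathcal{I}_m^+$. Your explicit verification that this forgetful map is a bijection (with $j_{m+1}=k_m-1$ or $j_{m+1}=k_m$ determined by which branch of the defining relation holds at $s=m$, and $j_{m+1}\ge 0$ automatic from $k_m\ge 1$) is a worthwhile detail the paper leaves implicit.
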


The above expression of the bound $F_{m,N}$ leads to the generalization of Theorem \ref{Ifantis-update} for any value of $m$.

\begin{thm} \label{strong}
For any $m\in\mathbb{N}$, the conditions
 \begin{align}
 & \lim_{n\to\infty}|b_n|=\infty,
 \notag
 \\
 & \lim_{n\to\infty} a_n G_{m,n}^+=0,
 \quad
 G_{m,n}^+ = \kern-7pt
 \sum_{(\boldsymbol{j}_m|\boldsymbol{k}_m)\in\mathcal{I}_m^+}
 \frac{a_{n-k_1}}{|b_{n-j_1}|} \frac{a_{n-k_2}}{|b_{n-j_2}|}
 \cdots \frac{a_{n-k_m}}{|b_{n-j_m}|},
 \label{eq-strong} \tag{$\mathfrak{B}_m$}
 \end{align}
imply that $T$ is self-adjoint and $\Lambda(T)=\sigma(T)$.
\end{thm}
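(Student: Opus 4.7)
The plan is to reduce the statement to a direct application of Proposition \ref{bound}, Theorem \ref{thm-self} and Remark \ref{rem-interlace}. Concretely, I would show that the hypotheses of \eqref{eq-strong} imply condition \eqref{gencond} for every sequence $x_N$ of normalized eigenvectors of $T_N$ whose eigenvalues $\lambda_N$ remain bounded; then self-adjointness of $T$ follows from Theorem \ref{thm-self}, and the identity \eqref{IfantisId3bis} together with \eqref{gencond} yields $\Lambda(T)\subseteq\sigma(T)$, while the reverse inclusion is guaranteed by self-adjointness as recalled in the introduction.

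First I would fix any sequence $x_N$ of normalized eigenvectors of $T_N$ with bounded eigenvalues $\lambda_N$ and, passing to a subsequence if necessary, assume $\lambda_N\to\lambda$. Since $|b_n|\to\infty$ and $\lambda_N$ is bounded, for each fixed $j\in\{0,1,\dots,m-1\}$ one has $\lambda_N\ne b_{N-j}$ for all sufficiently large $N$, so the hypothesis of Proposition \ref{bound} is met, and therefore $|(x_N,e_N)|\le F_{m,N}$ with $F_{m,N}$ given by \eqref{eq-bound}.

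The second step is to turn the requirement $\lim_{N\to\infty}a_N F_{m,N}=0$ into the cleaner asymptotic condition $\lim_{n\to\infty}a_n G_{m,n}^+=0$ appearing in \eqref{eq-strong}. For each fixed $j\in\{0,1,\dots,m-1\}$, the divergence of $|b_n|$ and the boundedness of $\lambda_N$ give
\[
\lim_{N\to\infty}\frac{|\lambda_N-b_{N-j}|}{|b_{N-j}|}=1.
\]
Now, the sum defining $F_{m,N}$ in \eqref{eq-bound} has a fixed finite number of summands (independent of $N$), and each summand is a product of exactly $m$ factors of the form $a_{N-k_s}/|\lambda_N-b_{N-j_s}|$ with $0\le j_s\le m-1$. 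Replacing each denominator $|\lambda_N-b_{N-j_s}|$ by the equivalent $|b_{N-j_s}|$ in each summand thus shows that $a_N F_{m,N}$ and $a_N G_{m,N}^+$ share the same limit. Hence \eqref{eq-strong} implies $\lim_{N\to\infty}a_N|(x_N,e_N)|=0$, which is precisely \eqref{gencond}.

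Finally, Remark \ref{rem-interlace} ensures that for every symmetric tridiagonal $T$ there exists at least one subsequence of truncations admitting normalized eigenvectors with bounded eigenvalues, so Theorem \ref{thm-self} applies and $T$ is self-adjoint. Since \eqref{gencond} holds for every such sequence, the identity \eqref{IfantisId3bis} gives $\|(T-\lambda)x_N\|\to 0$, so every $\lambda\in\Lambda(T)$ lies in $\sigma(T)$; the reverse inclusion follows from self-adjointness. The main obstacle is really only bookkeeping: one must control the replacement of $|\lambda_N-b_{N-j_s}|$ by $|b_{N-j_s}|$ uniformly inside the combinatorially complex but finite sum $F_{m,N}$, and this is routine because both $m$ and each shift $j_s\in\{0,\dots,m-1\}$ are fixed as $N\to\infty$.
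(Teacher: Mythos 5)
Your proposal is correct and takes essentially the same route as the paper's own proof: reduce the claim to showing $\lim_{N\to\infty}a_NF_{m,N}=0$ for bounded $\lambda_N$ via Proposition \ref{bound}, replace each $|\lambda_N-b_{N-j_s}|$ by $|b_{N-j_s}|$ term by term (legitimate because $|b_n|\to\infty$ and the sum has finitely many non-negative summands), and conclude with Theorem \ref{thm-self} and Remark \ref{rem-interlace}. No gaps.
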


\begin{proof}
In view of Theorem \ref{thm-self} and Remark \ref{rem-interlace}, it is enough to prove that the hypothesis of the theorem yield $\lim_{N\to\infty} a_NF_{m,N}=0$ for any bounded sequence $\lambda_N$, where we can assume the expression \eqref{eq-bound} of $F_{m,N}$ because it is valid for $N$ big enough due to the divergence of $|b_{N-j}|$ as $N\to\infty$. Due to the positivity of the summands of $F_{m,N}$ and $G_{m,N}^+$, we have the equivalences
 \[
 \kern-4pt
 \begin{aligned}
 & \lim_{N\to\infty} \kern-2pt a_N F_{m,N} = 0
 \,\Leftrightarrow\kern-1pt
 \lim_{N\to\infty} \kern-2pt
 \text{\footnotesize $a_N \frac{a_{N-k_1}}{|\lambda_N-b_{N-j_1}|}
 \cdots \frac{a_{N-k_m}}{|\lambda_N-b_{N-j_m}|}$} = 0,
 \kern3pt \forall (\boldsymbol{j}_m|\boldsymbol{k}_m)\in\mathcal{I}_m^+
 \\
 & \Leftrightarrow
 \lim_{N\to\infty}
 \text{\footnotesize $a_N \frac{a_{N-k_1}}{|b_{N-j_1}|}
 \cdots \frac{a_{N-k_m}}{|b_{N-j_m}|}$} = 0,
 \kern5pt \forall (\boldsymbol{j}_m|\boldsymbol{k}_m)\in\mathcal{I}_m^+
 \;\Leftrightarrow
 \lim_{N\to\infty} a_NG_{m,N}^+ = 0,
 \end{aligned}
 \]
which prove the theorem.
\end{proof}

The $m$-conditions \eqref{eq-strong} are the generalization of the conditions in Theorem \ref{Ifantis-update} which appear for the particular sets of multi-indices
 \[
 \begin{aligned}
 & \scriptstyle
 \mathcal{I}_1^+=\{(0|1)\},
 \quad
 \mathcal{I}_2^+=\{(0,1|1,1),(0,1|1,2)\},
 \quad
 \mathcal{I}_3^+=\{(0,1,0|1,1,1),(0,1,2|1,2,2),(0,1,2|1,2,3)\},
 \\
 & \scriptstyle
 \mathcal{I}_4^+=\{(0,1,0,1|1,1,1,1),(0,1,0,1|1,1,1,2),(0,1,2,1|1,2,2,1),
 (0,1,2,1|1,2,2,2),(0,1,2,3|1,2,3,3),(0,1,2,3|1,2,3,4)\}.
 \end{aligned}
 \]

Although Theorem \ref{strong} can be particularized to any other value of $m$, it is also possible to extract information of interest from the general $m$-conditions without resorting to intricate asymptotic conditions.

\begin{thm} \label{weak}
The conditions
 \[
 \begin{aligned}
 & \lim_{n\to\infty}|b_n|=\infty,
 \\
 & a_n,a_{n-1}=o(|b_n|^r) \text{ for some } r<1,
 \end{aligned}
 \]
imply that $T$ is self-adjoint and $\Lambda(T)=\sigma(T)$.
\end{thm}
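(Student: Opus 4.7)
The plan is to invoke Theorem~\ref{strong} with an appropriately large index $m$. Given $r<1$, I first pick $m\in\mathbb{N}$ with $r<m/(m+1)$; any integer $m>r/(1-r)$ suffices. Because $|b_n|\to\infty$ forces $|b_n|\ge 1$ eventually, the hypothesis $a_n,a_{n-1}=o(|b_n|^r)$ automatically upgrades to $a_n,a_{n-1}=o(|b_n|^{m/(m+1)})$. Thus the statement will follow from Theorem~\ref{strong} once I prove the implication $a_n,a_{n-1}=o(|b_n|^{m/(m+1)})\Rightarrow(\mathfrak{B}_m)$, which is the generalization to arbitrary $m$ of the splittings carried out in the excerpt for $m=1,2,3$ (and $m=4$).

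For that implication, I would fix a summand
\[
S_n=a_n\prod_{s=1}^m\frac{a_{n-k_s}}{|b_{n-j_s}|},\qquad (\boldsymbol{j}_m|\boldsymbol{k}_m)\in\mathcal{I}_m^+,
\]
of $a_n G_{m,n}^+$ and write it as a finite product of pieces of the form
\[
\left(\frac{a_{n-k}}{|b_{n-j}|^{m/(m+1)}}\right)^{\alpha},\qquad \alpha>0,\quad k\in\{j,j+1\}.
\]
Each such piece tends to zero: applying the hypothesis after the index shift $n\mapsto n-j$ yields $a_{n-j},a_{n-j-1}=o(|b_{n-j}|^{m/(m+1)})$, so $a_{n-k}/|b_{n-j}|^{m/(m+1)}\to 0$ whenever $k\in\{j,j+1\}$, and this persists under positive powers. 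A finite product of sequences converging to $0$ also converges to $0$, hence $S_n\to 0$; since $G_{m,n}^+$ is a finite sum of positive terms, this gives $a_nG_{m,n}^+\to 0$ and Theorem~\ref{strong} then delivers both self-adjointness of $T$ and the equality $\Lambda(T)=\sigma(T)$.

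The main obstacle is showing that the factorization above exists for every $(\boldsymbol{j}_m|\boldsymbol{k}_m)\in\mathcal{I}_m^+$. This is a purely combinatorial statement: the total exponent $m$ supplied by the denominators $|b_{n-j_s}|^{-1}$ must be partitioned among the $m+1$ numerators $a_n,a_{n-k_1},\ldots,a_{n-k_m}$ so that each numerator absorbs exactly $m/(m+1)$, subject to the support constraint that an $a_{n-k}$ may only draw from a $|b_{n-j}|^{-1}$ with $k\in\{j,j+1\}$. The supply--demand totals coincide, since $(m+1)\cdot\frac{m}{m+1}=m$, so feasibility amounts to a Hall-type condition on the bipartite graph between numerators and denominators. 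I would verify this feasibility by induction on $m$, exploiting the recursive description of $\mathcal{I}_m^+$ (each index $s$ makes the choice $k_s=j_s$, $j_{s+1}=j_s-1$ or $k_s=j_s+1$, $j_{s+1}=j_s+1$) together with the anchor $j_1=0$, which ensures that the standalone numerator $a_n$ is always adjacent to $|b_n|^{-1}$ and can thereby absorb its required share through the remainder of the graph.
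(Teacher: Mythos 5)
Your proposal follows the paper's proof essentially verbatim: reduce to Theorem~\ref{strong} for an $m$ with $r<m/(m+1)$, and factor each summand of $a_nG_{m,n}^+$ into positive powers of quotients $a_{n-k}/|b_{n-j}|^{m/(m+1)}$ with $k\in\{j,j+1\}$, each of which tends to zero by hypothesis. The one step you defer --- the existence of such a factorization for every $(\boldsymbol{j}_m|\boldsymbol{k}_m)\in\mathcal{I}_m^+$ --- is precisely where the paper does its (small amount of) work, and it requires neither Hall's theorem nor an induction: since $k_s\in\{j_s,j_s+1\}\cap\{j_{s+1},j_{s+1}+1\}$ and $j_1=0$, the numerator--denominator adjacency graph is a path $a_n,\,|b_{n-j_1}|,\,a_{n-k_1},\,|b_{n-j_2}|,\,\dots,\,|b_{n-j_m}|,\,a_{n-k_m}$, and the greedy assignment along it does the job: $a_n$ absorbs $|b_{n-j_1}|^{m/(m+1)}$, each $a_{n-k_s}$ with $1\le s\le m-1$ absorbs $|b_{n-j_s}|^{s/(m+1)}|b_{n-j_{s+1}}|^{(m-s)/(m+1)}$, and $a_{n-k_m}$ absorbs $|b_{n-j_m}|^{m/(m+1)}$, the exponents summing to $m/(m+1)$ at every numerator and to $1$ at every denominator, which is exactly the explicit splitting written in the paper's proof.
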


\begin{proof}
Let $m\in\mathbb{N}$. For any $(\boldsymbol{j}_m,\boldsymbol{k}_m)\in\mathcal{I}_m^+$, consider the factorization
 \[
 \begin{aligned}
 a_n \, \frac{a_{n-k_1}}{|b_{n-j_1}|} \cdots \frac{a_{n-k_m}}{|b_{n-j_m}|}
 & = \frac{a_n}{|b_{n-j_1}|^{\frac{m}{m+1}}}
 \frac{a_{n-k_m}}{|b_{n-j_m}|^{\frac{m}{m+1}}}
 \prod_{s=1}^{m-1}
 \frac{a_{n-k_s}}{|b_{n-j_s}|^{\frac{s}{m+1}} |b_{n-j_{s+1}}|^{\frac{m-s}{m+1}}}
 \\
 & \kern-37pt = \frac{a_n}{|b_{n}|^{\frac{m}{m+1}}}
 \frac{a_{n-k_m}}{|b_{n-j_m}|^{\frac{m}{m+1}}}
 \prod_{s=1}^{m-1}
 \left(
 \frac{a_{n-k_s}}{|b_{n-j_s}|^{\frac{m}{m+1}}}
 \right)^{\kern-3pt\frac{s}{m}}
 \left(
 \frac{a_{n-k_s}}{|b_{n-j_{s+1}}|^{\frac{m}{m+1}}}
 \right)^{\kern-3pt\frac{m-s}{m}},
 \end{aligned}
 \]
where we have taken into account that $j_1=0$ in $\mathcal{I}_m^+$.
From the definition of the set $\mathcal{I}_m^+$ we see that $n-k_s=n-j_s=n-j_{s+1}-1$ or $n-k_s=n-j_s-1=n-j_{s+1}$ in this factorization. Thus, the condition $a_n,a_{n-1}=o(|b_n|^\frac{m}{m+1})$ guarantees that all the summands of $G_{m,n}^+$ in \eqref{eq-strong} converge to zero as $n\to\infty$. Bearing in mind Theorem \ref{strong}, this means that
 \[
 \left\{
 \begin{array}{l}
 {\displaystyle\lim_{n\to\infty}}|b_n|=\infty
 \\
 a_n,a_{n-1}=o(|b_n|^\frac{m}{m+1}) \text{ for some } m\in\mathbb{N}
 \end{array}
 \right.
 \Rightarrow\kern5pt
 \text{\parbox{100pt}{$T$ is self-adjoint and $\Lambda(T)=\sigma(T)$.}}
 \]
This statement is equivalent to the theorem because $m/(m+1)$ is an increasing sequence converging to 1 for $m\to\infty$. Thus, for any $r<1$ there exists $m\in\mathbb{N}$ such that $r<m/(m+1)$, and then the divergence of $|b_n|$ ensures that the asymptotic behaviour $o(|b_n|^r)$ implies $o(|b_n|^\frac{m}{m+1})$.
\end{proof}

\section{Other $m$-conditions for self-adjointness}
 \label{OmC}
 \setcounter{equation}{0}
 \renewcommand{\theequation}{\thesection.\arabic{equation}}
 \setcounter{thm}{0}
 \renewcommand{\thethm}{\thesection.\arabic{thm}}
 \par

We have seen that the study of the relation between $\Lambda(T)$ and $\sigma(T)$ sheds light on the self-adjointness of $T$. We will show in this section that the iterative use of eigenvalue equations to obtain sets of infinitely many sufficient conditions for self-adjointness (and thus for $\Lambda(T) \supseteq \sigma(T)$) can be pursued in other ways. Two different types of results will illustrate this strategy. Although none of them deals with the limit points $\Lambda(T)$, both have in common with the previous approach the fact that they are especially adapted to the analysis of symmetric tridiagonal operators with an unbounded main diagonal.

We will discuss first a set of $m$-conditions extending the well known Carleman criterion, which states that $T$ is self-adjoint if
 \begin{equation} \label{CAR}
 \sum_{n=1}^\infty \frac{1}{a_n} = \infty.
 \end{equation}
To obtain this generalization, let us remember first a proof of Carleman's criterion based on an orthogonal polynomial characterization of self-adjointness: $T$ is self-adjoint iff $\sum_{n=1}^\infty |p_n(z)|^2$ is divergent, where $p_n(x)$ are the orthogonal polynomials given in \eqref{Intro_OPRR} and $z$ is any point of $\mathbb{C}\setminus\mathbb{R}$ (we will eventually choose $z=i$ for convenience). This is equivalent to saying that $T$ is not self-adjoint iff $(p_1(z),p_2(z),\dots)$ is in $\ell^2$, which means that $(p_1(z),p_2(z),\dots)$ is an eigenvector of the maximal extension of $T$ with eigenvalue $z$. Recurrence \eqref{Intro_OPRR} is in this case the corresponding eigenvalue equation.

The Christoffel-Darboux identity for orthonormal polynomials,
 \[
 (x-y) \sum_{k=1}^n p_k(x) p_k(y) = a_n (p_{n+1}(x)p_n(y)-p_n(x)p_{n+1}(y)),
 \]
yields for $x=i$ and $y=-i$
 \[
 \sum_{k=1}^n |v_k|^2 = a_n \operatorname{Im}(v_{n+1}\overline{v_n}),
 \qquad v_n=p_n(i).
 \]
From this identity we obtain
 \begin{equation}
 1 \le a_n |v_n| |v_{n+1}|,
 \label{CD0}
 \end{equation}
so, due to the Cauchy-Schwarz inequality,
 \[
 \sum_{n=1}^\infty \frac{1}{a_n} \le \sum_{n=1}^\infty |v_n||v_{n+1}|
 \le \sum_{n=1}^\infty |v_n|^2.
 \]
Therefore, if $\sum_{n=1}^\infty 1/a_n$ diverges, so does $\sum_{n=1}^\infty |v_n|^2$ and $T$ is self-adjoint.

\smallskip

Relations \eqref{Intro_OPRR} defining $p_n(x)$ will play the role of `eigenvalue' equations to generate new versions of Carleman's criterion. These new criteria coming from the iterative use of \eqref{Intro_OPRR} amount to the substitution of the condition $\sum_{n=1}^\infty 1/a_n=\infty$ by $m$-conditions with the form
 \[
 \sum_{n=m+1}^\infty \frac{1}{a_nG_{m,n}} = \infty,
 \qquad
 G_{m,n} = G_{m,n}(\{a_{n-k}\}_{k=-m+1}^m,\{b_{n-k}\}_{k=-m+1}^{m-1}).
 \]
As we will see, $G_{m,n}=\infty$ when $b_{n-k}=0$ for some $k$ with $|k|\le m-1$. Then we understand that $1/a_nG_{m,n}=0$ in the above series.

To generate these Carleman type criteria, note that \eqref{Intro_OPRR} gives the inequality
 \begin{equation}
 |v_n| \le \gamma_n^- |v_{n-1}| + \gamma_n^+ |v_{n+1}|,
 \quad \gamma_n^- = \frac{a_{n-1}}{|b_n|}, \quad \gamma_n^+=\frac{a_n}{|b_n|},
 \quad n\ge1,
 \label{ineq_vn}
 \end{equation}
where $v_0=0$ due to the convention $p_0(x)=0$, and we take $\gamma_n^\pm=\infty$ if $b_n=0$. From \eqref{ineq_vn} we obtain
 \[
 |v_n| \le (\gamma_n^-+\gamma_n^+) (|v_{n-1}|+|v_{n+1}|),
 \qquad n\ge2,
 \]
which combined with \eqref{CD0} leads to
 \[
 \frac{1}{a_n(\gamma_n^-+\gamma_n^+)} \le (|v_{n-1}|+|v_{n+1}|)|v_{n+1}|,
 \qquad n\ge2,
 \]
assuming that $1/a_n(\gamma_n^-+\gamma_n^+)=0$ if $b_n=0$.
Using again the Cauchy-Schwarz inequality we find that
 \[
 \sum_{n=2}^\infty \frac{1}{a_n(\gamma_n^-+\gamma_n^+)}
 \le \sum_{n=2}^\infty (|v_{n-1}||v_{n+1}|+|v_{n+1}|^2)
 \le 2 \sum_{n=1}^\infty |v_n|^2.
 \]
Just as in the case of Carleman's criterion we arrive at the following result.

\begin{thm} \label{Car1}
$T$ is self-adjoint if
 \begin{equation}
 \sum_{n=2}^\infty \frac{1}{a_n\left(\frac{a_{n-1}+a_n}{|b_n|}\right)} = \infty.
 \label{eq-Car1} \tag{$\mathfrak{C}_1$}
 \end{equation}
\end{thm}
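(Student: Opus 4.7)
The plan is to combine the orthogonal-polynomial test for self-adjointness with the Christoffel--Darboux identity, exactly as in the proof of Carleman's criterion recalled just above the statement, but replacing the trivial bound on $|v_n||v_{n+1}|$ by the sharper one coming from one extra use of the recurrence. Writing $v_n=p_n(i)$, the characterization says that $T$ is self-adjoint iff $\sum_{n\ge 1}|v_n|^2=\infty$, so it suffices to show that divergence of the series in \eqref{eq-Car1} forces $\sum|v_n|^2=\infty$.

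First I would record the two ingredients already displayed in the preamble. Evaluating \eqref{Intro_OPRR} at $x=i$ gives $a_nv_{n+1}+a_{n-1}v_{n-1}=(i-b_n)v_n$, and since $|i-b_n|\ge|b_n|$ this yields the inequality \eqref{ineq_vn}, namely $|v_n|\le \gamma_n^-|v_{n-1}|+\gamma_n^+|v_{n+1}|$, with $\gamma_n^-=a_{n-1}/|b_n|$, $\gamma_n^+=a_n/|b_n|$ and the convention $\gamma_n^\pm=\infty$ when $b_n=0$. Separately, the Christoffel--Darboux identity evaluated at $x=i$, $y=-i$ furnishes the pointwise bound $1\le a_n|v_n||v_{n+1}|$ recorded in \eqref{CD0}.

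Next I would combine them. From \eqref{ineq_vn} one deduces the cruder estimate $|v_n|\le(\gamma_n^-+\gamma_n^+)(|v_{n-1}|+|v_{n+1}|)$ for $n\ge 2$; inserting this into \eqref{CD0} produces
\[
\frac{1}{a_n(\gamma_n^-+\gamma_n^+)}\le(|v_{n-1}|+|v_{n+1}|)\,|v_{n+1}|, \qquad n\ge 2,
\]
with the convention that the left-hand side is taken to be $0$ whenever $b_n=0$. Summing over $n\ge 2$ and applying the elementary inequality $2|v_{n-1}||v_{n+1}|\le|v_{n-1}|^2+|v_{n+1}|^2$ (equivalently a direct Cauchy--Schwarz argument) bounds the series in \eqref{eq-Car1} above by $2\sum_{n\ge 1}|v_n|^2$. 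Consequently the hypothesis forces $\sum|v_n|^2=\infty$, and self-adjointness follows from the orthogonal-polynomial criterion.

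I do not anticipate any real obstacle: every displayed step has already been carried out in the preamble, and the only piece of bookkeeping is to verify that the conventions when $b_n=0$ propagate consistently through the chain of inequalities, which is immediate since the affected summands vanish on both sides and can simply be omitted from the sums.
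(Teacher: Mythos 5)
Your proposal is correct and follows essentially the same route as the paper: derive $|v_n|\le(\gamma_n^-+\gamma_n^+)(|v_{n-1}|+|v_{n+1}|)$ from \eqref{ineq_vn}, combine with the Christoffel--Darboux bound \eqref{CD0}, and sum using Cauchy--Schwarz (or the equivalent AM--GM step you use) to dominate the series in \eqref{eq-Car1} by $2\sum_{n\ge1}|v_n|^2$. The handling of the $b_n=0$ convention is also as in the paper, so there is nothing to add.
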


Condition \eqref{eq-Car1} is similar to a known one introduced by J. J. Dennis and H. S. Wall in \cite[Theorem 4.4]{DW1945} regarding the study of Jacobi continued fractions \eqref{Intro_CF} with complex coefficients $a_n$, $b_n$. In the case $a_n>0$, $b_n\in\mathbb{R}$, Dennis-Wall condition also implies the self-adjointness of $T$ and reads as
 \begin{equation}
 \sum_{n=2}^\infty \frac{|b_n|}{a_na_{n-1}} = \infty.
 \label{eq-DW}
 \end{equation}
This condition is obviously weaker than \eqref{eq-Car1}, but this latter one should be seen as a first instance of a set of infinitely many self-adjointness conditions which can eventually improve the results obtained solely with Carleman and Dennis-Wall criteria. Although these two criteria are also the simplest of infinitely many ones given in \cite[Equation (4.14)]{DW1945}, the advantage of the conditions that we will obtain when generalizing \eqref{eq-Car1} rests on their controllable dependence on the coefficients $a_n$, $b_n$. This makes possible a simultaneous application of the infinitely many Carleman type criteria in practical cases, as it is illustrated in
\S\ref{Comparisons}.

We can obtain another variant of Carleman's criterion by inserting the $n-1$-th and $n+1$-th equations of \eqref{ineq_vn} into the $n$-th one,
 \[
 |v_n| \le
 \gamma_n^- (\gamma_{n-1}^-|v_{n-2}|+\gamma_{n-1}^+|v_n|)
 + \gamma_n^+ (\gamma_{n+1}^-|v_n|+\gamma_{n+1}^+|v_{n+2}|),
 \qquad n\ge3,
 \]
which leads to
 \[
 |v_n| \le
 [\gamma_n^- (\gamma_{n-1}^-+\gamma_{n-1}^+)
 + \gamma_n^+ (\gamma_{n+1}^-+\gamma_{n+1}^+)]
 (|v_{n-2}|+|v_n|+|v_{n+2}|),
 \qquad n\ge3.
 \]
The above inequality can be combined with \eqref{CD0} to obtain for $n\ge3$
 \[
 \frac{1}
 {a_n[\gamma_n^- (\gamma_{n-1}^-+\gamma_{n-1}^+)
 + \gamma_n^+ (\gamma_{n+1}^-+\gamma_{n+1}^+)]}
 \le (|v_{n-2}|+|v_n|+|v_{n+2}|) |v_{n+1}|,
 \]
which becomes trivial when $b_{n-1}$, $b_n$ or $b_{n+1}$ vanish because then the left-hand side must be understood as zero. Proceeding as in the previous case we get
 \[
 \sum_{n=3}^\infty \frac{1}{a_n
 [\gamma_n^-(\gamma_{n-1}^-+\gamma_{n-1}^+)
 + \gamma_n^+(\gamma_{n+1}^-+\gamma_{n+1}^+)]}
 \le 3 \sum_{n=1}^\infty |v_n|^2,
 \]
which ends in a new Carleman type criterion.

\begin{thm} \label{Car2}
$T$ is self-adjoint if
 \begin{equation}
 \sum_{n=3}^\infty \frac{1}{a_n
 \left[\frac{a_{n-1}}{|b_n|} \left(\frac{a_{n-2}+a_{n-1}}{|b_{n-1}|}\right)
 + \frac{a_n}{|b_n|} \left(\frac{a_n+a_{n+1}}{|b_{n+1}|}\right)\right]}
 = \infty.
 \label{eq-Car2}
 \tag{$\mathfrak{C}_2$}
 \end{equation}
\end{thm}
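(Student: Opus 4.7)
The plan is to mirror the derivation of \eqref{eq-Car1} but to feed the inequality \eqref{ineq_vn} into itself one additional time before invoking the Christoffel--Darboux bound \eqref{CD0}. Via the orthogonal-polynomial characterization of self-adjointness recalled just before Theorem \ref{Car1}, it is enough to show that if the series in \eqref{eq-Car2} diverges, then $\sum_{n=1}^\infty|v_n|^2=\infty$ for $v_n=p_n(i)$, forcing $T$ to be self-adjoint.

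First I would apply \eqref{ineq_vn} at index $n$ and then bound each of $|v_{n-1}|$ and $|v_{n+1}|$ by the same inequality applied at indices $n-1$ and $n+1$ respectively. Grouping the resulting terms produces
$$|v_n|\le\bigl[\gamma_n^-(\gamma_{n-1}^-+\gamma_{n-1}^+)+\gamma_n^+(\gamma_{n+1}^-+\gamma_{n+1}^+)\bigr]\,(|v_{n-2}|+|v_n|+|v_{n+2}|),\qquad n\ge 3.$$
Multiplying through by $|v_{n+1}|$ and combining with \eqref{CD0}, which supplies $1\le a_n|v_n||v_{n+1}|$, yields
$$\frac{1}{a_n\bigl[\gamma_n^-(\gamma_{n-1}^-+\gamma_{n-1}^+)+\gamma_n^+(\gamma_{n+1}^-+\gamma_{n+1}^+)\bigr]}\le(|v_{n-2}|+|v_n|+|v_{n+2}|)\,|v_{n+1}|,\qquad n\ge3,$$
whose left-hand side is exactly the summand in \eqref{eq-Car2}. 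The convention that this reciprocal is read as $0$ whenever any of $b_{n-1},b_n,b_{n+1}$ vanishes (since then the corresponding $\gamma^\pm$ are infinite) handles the degenerate indices cleanly and is consistent with the discussion of $1/(a_n G_{m,n})$ preceding Theorem \ref{Car1}.

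Finally I would sum the displayed bound from $n=3$ to $\infty$ and apply the Cauchy--Schwarz inequality separately to the three series $\sum|v_{n-2}||v_{n+1}|$, $\sum|v_n||v_{n+1}|$, and $\sum|v_{n+2}||v_{n+1}|$, each of which is controlled by $\sum_{n=1}^\infty|v_n|^2$. The partial sums of \eqref{eq-Car2} are therefore bounded by $3\sum_{n=1}^\infty|v_n|^2$, and divergence of \eqref{eq-Car2} forces $\sum_n|v_n|^2=\infty$, giving the self-adjointness of $T$. The proof is essentially mechanical; the only delicate aspect is the bookkeeping with shifted indices and the silent dropping of the degenerate terms with a vanishing $b_n$, $b_{n\pm1}$ on both sides of the estimate, but once the $\gamma^\pm=\infty$ convention is in place the argument runs in complete parallel to the one that produced \eqref{eq-Car1}.
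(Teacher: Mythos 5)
Your proposal is correct and follows essentially the same route as the paper: one extra insertion of \eqref{ineq_vn} into itself, the grouped bound $|v_n|\le[\gamma_n^-(\gamma_{n-1}^-+\gamma_{n-1}^+)+\gamma_n^+(\gamma_{n+1}^-+\gamma_{n+1}^+)](|v_{n-2}|+|v_n|+|v_{n+2}|)$ for $n\ge3$, combination with \eqref{CD0}, and Cauchy--Schwarz on the three shifted series to get the bound $3\sum_{n=1}^\infty|v_n|^2$. The handling of vanishing $b_{n-1},b_n,b_{n+1}$ via the $\gamma^\pm=\infty$ convention also matches the paper's treatment.
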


Conditions \eqref{eq-Car1} and \eqref{eq-Car2} are only two particular cases of a set of $m$-conditions generalizing Carleman's criterion. In contrast to Theorem \ref{strong}, these $m$-conditions involve the full set of multi-indices $\mathcal{I}_m$ instead of its subset $\mathcal{I}_m^+$ because we are dealing now with `eigenvalue' equations related with the full operator $T$ instead of a truncated operator.

\begin{thm} \label{Carm}
For any $m\in\mathbb{N}$, the condition
 \begin{equation}
 \sum_{n=m+1}^\infty \frac{1}{a_nG_{m,n}} = \infty,
 \quad
 G_{m,n} = \kern-7pt
 \sum_{(\boldsymbol{j}_m|\boldsymbol{k}_m)\in\mathcal{I}_m}
 \frac{a_{n-k_1}}{|b_{n-j_1}|} \frac{a_{n-k_2}}{|b_{n-j_2}|}
 \cdots \frac{a_{n-k_m}}{|b_{n-j_m}|},
 \label{eq-Carm}
 \tag{$\mathfrak{C}_m$}
 \end{equation}
implies that $T$ is self-adjoint.
\end{thm}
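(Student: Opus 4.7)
The plan is to follow the same route used for Theorems \ref{Car1} and \ref{Car2}: iterate the basic inequality \eqref{ineq_vn} exactly $m$ times to bound $|v_n|$ by a multiple of $G_{m,n}$ times a sum of nearby $|v_k|$'s, combine this with the Christoffel--Darboux consequence $1 \le a_n|v_n||v_{n+1}|$, and close the argument by summing over $n$ and applying Cauchy--Schwarz, exactly as for $m=1,2$.

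First, by induction on $m$, I would prove the bound
\[
|v_n| \le \sum_{(\boldsymbol{j}_{m+1}|\boldsymbol{k}_m)\in\widehat{\mathcal{I}}_m}
\frac{a_{n-k_1}}{|b_{n-j_1}|} \cdots \frac{a_{n-k_m}}{|b_{n-j_m}|}\, |v_{n-j_{m+1}}|,
\]
valid whenever every index $n-j_s$ is at least $1$ and every $b_{n-j_s}$ is nonzero (otherwise the corresponding term of \eqref{eq-Carm} vanishes by convention and there is nothing to bound). The base case $m=1$ is \eqref{ineq_vn}, since $\widehat{\mathcal{I}}_1=\{(0,-1|0),(0,1|1)\}$. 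For the inductive step, I would apply \eqref{ineq_vn} to $|v_{n-j_{m+1}}|$ inside each summand; the two resulting terms correspond exactly to the two clauses defining $\widehat{\mathcal{I}}_{m+1}$, namely $k_{m+1}=j_{m+1}+1,\ j_{m+2}=j_{m+1}+1$ and $k_{m+1}=j_{m+1},\ j_{m+2}=j_{m+1}-1$.

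Next, I would replace each factor $|v_{n-j_{m+1}}|$ by the larger sum $\sum_{|\ell|\le m}|v_{n-\ell}|$ and factor it out of the summation. Since the products $\prod_s a_{n-k_s}/|b_{n-j_s}|$ do not involve $j_{m+1}$ and the projection $\widehat{\mathcal{I}}_m\to\mathcal{I}_m$ that forgets $j_{m+1}$ is a bijection, this yields $|v_n| \le G_{m,n}\sum_{|\ell|\le m}|v_{n-\ell}|$, and hence, using $1 \le a_n|v_n||v_{n+1}|$,
\[
\frac{1}{a_n G_{m,n}} \le |v_{n+1}| \sum_{|\ell|\le m} |v_{n-\ell}|.
\]

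Finally, summing over $n\ge m+1$ and applying Cauchy--Schwarz to each of the $2m+1$ sums $\sum_n|v_{n-\ell}||v_{n+1}|$ produces $\sum_{n=m+1}^\infty 1/(a_nG_{m,n}) \le (2m+1)\sum_{n=1}^\infty|v_n|^2$. If \eqref{eq-Carm} holds, the left side is infinite, hence $\sum_n|v_n|^2=\infty$ and $T$ is self-adjoint by the orthogonal polynomial characterization recalled at the beginning of the section. The main obstacle is the combinatorial bookkeeping in the induction step: one must check that expanding $|v_{n-j_{m+1}}|$ via \eqref{ineq_vn} generates precisely the new pair $(k_{m+1},j_{m+2})$ prescribed by the definition of $\widehat{\mathcal{I}}_{m+1}$, so that the recursion on the index sets closes cleanly; once this is set up, the analytic part of the proof is essentially identical to the cases $m=1,2$ already treated.
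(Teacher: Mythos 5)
Your proposal is correct and follows essentially the same route as the paper: the inductive bound $|v_n|\le\sum_{\widehat{\mathcal{I}}_m}(\cdots)|v_{n-j_{m+1}}|$, the replacement of $|v_{n-j_{m+1}}|$ by a sum of nearby terms to factor out $G_{m,n}$, and the combination with $1\le a_n|v_n||v_{n+1}|$ plus Cauchy--Schwarz. The only cosmetic difference is that the paper exploits the fact that $j_{m+1}\in\{-m,-m+2,\dots,m\}$ to get the constant $m+1$ instead of your $2m+1$, which is immaterial for the divergence conclusion.
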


\begin{proof}
From \eqref{ineq_vn}, a proof by induction similar to that one of Proposition \ref{deltaN} shows that
 \begin{equation}
 |v_n| \le
 \sum_{(\boldsymbol{j}_{m+1}|\boldsymbol{k}_m)\in\widehat{\mathcal{I}}_m}
 \frac{a_{n-k_1}}{|b_{n-j_1}|} \frac{a_{n-k_2}}{|b_{n-j_2}|}
 \cdots \frac{a_{n-k_m}}{|b_{n-j_m}|} \, |v_{n-j_{m+1}}|,
 \quad n\ge m+1.
 \label{vn}
 \end{equation}
Bearing in mind that $j_{m+1}\in\{-m,-m+2,\dots,m-2,m\}$ for $(\boldsymbol{j}_{m+1}|\boldsymbol{k}_m)\in\widehat{\mathcal{I}}_m$, inequality \eqref{vn} yields
 \[
 |v_n| \le G_{m,n} \sum_{k=0}^m |v_{n-m+2k}|,
 \qquad n\ge m+1.
 \]
This can be combined with \eqref{CD0} to obtain
 \[
 \frac{1}{a_n G_{m,n}} \le
 \sum_{k=0}^m |v_{n-m+2k}| |v_{n+1}|,
 \qquad n\ge m+1,
 \]
a trivial inequality when $G_{m,n}=\infty$ because we understand that $1/a_nG_{m,n}=0$ in such a case. Then, the Cauchy-Schwarz inequality gives
 \[
 \sum_{n=m+1}^\infty \frac{1}{a_n G_{m,n}} \le (m+1) \sum_{n=1}^\infty |v_n|^2,
 \]
which proves the theorem.
\end{proof}

We will see in \S\ref{Comparisons} that Theorem~\ref{Carm} applies in some cases where conditions \eqref{CAR} and \eqref{eq-DW} do not. This shows that Carleman type $m$-conditions \eqref{eq-Carm} can be used to improve the results obtained with the standard Carleman and Dennis-Wall criteria.

Since the general term of a convergent series must converge to zero, a consequence follows directly from Theorem~\ref{Carm}.

\begin{cor} \label{cor-Car}
For any $m\in\mathbb{N}$, the condition
 \[
 \liminf_{n\to\infty} a_nG_{m,n} < \infty
 \]
implies that $T$ is self-adjoint.
\end{cor}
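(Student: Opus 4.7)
The plan is to argue by contrapositive on the series appearing in Theorem \ref{Carm}. The hypothesis $\liminf_{n\to\infty} a_nG_{m,n} < \infty$ means there exist a constant $M>0$ and an infinite subsequence of indices $n_k\to\infty$ along which $a_{n_k}G_{m,n_k}\le M$, and in particular $G_{m,n_k}<\infty$ (so no denominator $|b_{n_k-j_s}|$ vanishes, and $1/(a_{n_k}G_{m,n_k})$ is genuinely a nonzero term of the series \eqref{eq-Carm}). Consequently, along this subsequence
\[
\frac{1}{a_{n_k}G_{m,n_k}} \ge \frac{1}{M} > 0,
\]
so the general term of the series in \eqref{eq-Carm} does not converge to zero. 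This forces the series to diverge.

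Having established divergence of $\sum_{n=m+1}^\infty 1/(a_nG_{m,n})$, I would invoke Theorem \ref{Carm} directly to conclude that $T$ is self-adjoint. This matches the intuition signalled in the paragraph immediately preceding the corollary (\textit{the general term of a convergent series must converge to zero}): the corollary is literally the contrapositive version of \eqref{eq-Carm} packaged as a $\liminf$ condition, since failure of $a_nG_{m,n}\to\infty$ is equivalent to $\liminf_{n\to\infty}a_nG_{m,n}<\infty$.

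There is essentially no genuine obstacle in the argument. The only minor subtlety worth flagging explicitly is the bookkeeping convention $1/(a_nG_{m,n})=0$ when $G_{m,n}=\infty$ (which occurs precisely when some $b_{n-j_s}=0$ within the relevant window): one must verify that finiteness of $a_{n_k}G_{m,n_k}$ along the chosen subsequence guarantees that the lower bound $1/M$ applies to actual nonzero terms, so that their infinite cardinality really rules out convergence of the series.
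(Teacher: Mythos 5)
Your argument is correct and is exactly the paper's reasoning: the paper derives the corollary from Theorem \ref{Carm} by noting that the general term of a convergent series must tend to zero, which is precisely your contrapositive via the subsequence on which $a_nG_{m,n}$ is bounded. Your extra remark about the convention $1/(a_nG_{m,n})=0$ when $G_{m,n}=\infty$ is a sensible clarification but introduces no new idea.
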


The orthogonal polynomial characterization of self-adjointness can be used to generate another type of $m$-conditions for self-adjointness. The starting point is again a consequence of \eqref{Intro_OPRR}, namely,
 \begin{equation}
 |v_n|^2 \le
 2 \left[(\gamma_n^-)^2 |v_{n-1}|^2 + (\gamma_n^+)^2 |v_{n+1}|^2\right],
 \label{ineq_vn_bis}
 \end{equation}
which holds for any $n\in\mathbb{Z}$ if we define $\gamma_n^-=0$ for $n\le1$ and $\gamma_n^+=v_n=0$ for $n\le0$. For the rest of the indices we should take $\gamma_n^\pm=\infty$ when $b_n=0$.

Summing up \eqref{ineq_vn_bis} for $n\ge1$ gives
 \[
 \sum_{n=1}^\infty |v_n|^2 \le
 2 \sum_{n=1}^\infty
 \left[(\gamma_{n+1}^-)^2 + (\gamma_{n-1}^+)^2\right] |v_n|^2.
 \]
Therefore, the inequality
 \begin{equation}
 (\gamma_{n+1}^-)^2 + (\gamma_{n-1}^+)^2 < \frac{1}{2}, \qquad n\ge1,
 \label{strong-JN1}
 \end{equation}
is incompatible with the convergence of $\sum_{n=1}^\infty|v_n|^2$ and implies that $T$ is self-adjoint.

Suppose now that \eqref{ineq_vn_bis} holds only for $n$ big enough.
Then, $b_n\ne0$ up to a finite number of indices $n$. We can define a new symmetric tridiagonal operator $\widetilde{T}$ satisfying \eqref{strong-JN1} by changing the null coefficients $b_k$ of $T$ by non null ones, and setting $a_k$ small enough for all the coefficients appearing in the expressions $(\gamma_{n+1}^-)^2+(\gamma_{n-1}^+)^2$ where \eqref{strong-JN1} fails, the rest of the coefficients coinciding with those of $T$. The operator $\widetilde{T}$ is self-adjoint because it satisfies \eqref{strong-JN1}. Since $T$ differs from $\widetilde{T}$ in a bounded self-adjoint operator, we conclude that $T$ is self-adjoint too.

Thus, we have proved the following result.

\begin{thm} \label{thm-JN1}
$T$ is self-adjoint if there exists an index $n_0\in\mathbb{N}$ such that
 \begin{equation}
 \frac{a_n^2}{b_{n+1}^2}+\frac{a_{n-1}^2}{b_{n-1}^2} < \frac{1}{2},
 \qquad n \ge n_0.
 \label{JN1}
 \tag{$\mathfrak{D}_1$}
 \end{equation}
\end{thm}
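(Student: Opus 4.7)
The plan is to follow the orthogonal polynomial route to self-adjointness that has been used throughout Section \ref{OmC}: with $v_n=p_n(i)$, $T$ is self-adjoint iff $\sum_{n=1}^\infty |v_n|^2=\infty$, so the goal is to show that \eqref{JN1} rules out convergence of this series.

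First I would treat the clean case $n_0=1$, in which \eqref{strong-JN1} holds for every $n\ge 1$. Summing the already derived inequality \eqref{ineq_vn_bis} over $n\ge 1$ and reindexing the two terms on the right, using $v_0=0$ together with the boundary conventions $\gamma_1^-=\gamma_0^+=0$, produces
\[
\sum_{n=1}^\infty |v_n|^2 \,\le\, 2 \sum_{n=1}^\infty \bigl[(\gamma_{n+1}^-)^2 + (\gamma_{n-1}^+)^2\bigr]\,|v_n|^2,
\]
which rewrites as $\sum_{n=1}^\infty (1-c_n)|v_n|^2\le 0$ with $c_n=2[(\gamma_{n+1}^-)^2+(\gamma_{n-1}^+)^2]<1$. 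Arguing by contradiction, if $\sum|v_n|^2$ were finite the left-hand side would be an absolutely convergent sum of non-negative numbers containing the strictly positive term $(1-c_1)|v_1|^2$ (note that $v_1=p_1(i)=1$), which is impossible. Therefore the series diverges and $T$ is self-adjoint.

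The second step is to reduce the general statement to the case $n_0=1$ by a finite-rank modification. Since \eqref{JN1} is assumed only for $n\ge n_0$, only finitely many indices violate \eqref{strong-JN1}, and any vanishing $b_k$ necessarily sits among these bad indices (otherwise \eqref{JN1} itself would blow up at nearby $n$). I would build a new tridiagonal operator $\widetilde{T}$ that coincides with $T$ outside a fixed finite window, replacing any vanishing $b_k$ by a nonzero real number and shrinking the finitely many $a_k$ appearing in the bad inequalities enough so that \eqref{strong-JN1} holds at every $n\ge 1$. Step one gives the self-adjointness of $\widetilde{T}$, and since $T-\widetilde{T}$ is a finite-rank symmetric, hence bounded self-adjoint, operator, $T=\widetilde{T}+(T-\widetilde{T})$ is self-adjoint as well.

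The main subtle point I anticipate is the non-uniform strict inequality in step one: because $c_n<1$ is not assumed with any uniform gap, one cannot conclude by a simple geometric comparison, and must instead exploit $v_1\ne 0$ together with a termwise argument. All remaining steps—the pointwise bound \eqref{ineq_vn_bis}, the reindexing, and the finite perturbation—are routine bookkeeping.
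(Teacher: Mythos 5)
Your proposal follows the paper's own proof essentially verbatim: sum \eqref{ineq_vn_bis} over $n\ge1$, reindex to conclude that \eqref{strong-JN1} holding for all $n\ge1$ is incompatible with the convergence of $\sum_{n\ge1}|v_n|^2$, and then reduce the general $n_0$ to this case by a finite-rank (hence bounded self-adjoint) perturbation that replaces any vanishing $b_k$ and shrinks the finitely many offending $a_k$. Your extra care about the non-uniform strict inequality, resolved by noting $v_1=p_1(i)=1\ne0$, only makes explicit a point the paper leaves implicit; the route is the same.
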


The condition
 \[
 \limsup_{n\to\infty}
 \left(\frac{a_n^2}{b_{n+1}^2}+\frac{a_{n-1}^2}{b_{n-1}^2}\right) < \frac{1}{2},
 \]
is slightly stronger than \eqref{JN1}, hence it also implies the self-adjointness of $T$. This condition is similar but different from another one due to J. Janas and S. Naboko, namely,
 \begin{equation}
 \limsup_{n\to\infty} \frac{a_n^2+a_{n-1}^2}{b_n^2} < \frac{1}{2}.
 \label{JN}
 \end{equation}
This, together with the divergence of $|b_n|$, guarantees that $T$ is self-adjoint with a discrete spectrum \cite{JN2001}.

In the case $\lim_{n\to\infty}a_n=\infty$, Janas-Naboko condition \eqref{JN} can be understood as a special case of a more general condition for self-adjointness and discreteness of the spectrum developed by P. Cojuhari and J. Janas in \cite{CJ2007}. In this work the authors study symmetric tridiagonal operators defined by $a_n=-\alpha_n$ and $b_n=\alpha_{n-1}+\alpha_n+\beta_n$ with $\alpha_n>0$ and $\beta_n\ge0$ for big enough $n$ (indeed they deal with generalizations of these operators to weighted $\ell^2$ spaces). The change of basis $e_n \to (-1)^n e_n$ shows that one can set $a_n=\alpha_n$ without modifying the expression for $b_n$, thus fitting with our choice $a_n>0$ for $T$ as stated in \eqref{Intro_T}. Bearing this in mind, the result of interest for us \cite[Theorem 3.2 (i)]{CJ2007} can be rewritten by saying that $T$ is self-adjoint with a discrete spectrum if
 \begin{equation}
 \begin{gathered}
 \beta_n=b_n-a_n-a_{n-1}>0 \; \text{ for big enough } n,
 \\
 \lim_{n\to\infty}a_n=\infty, \qquad
 \lim_{n\to\infty}(\beta_n+\beta_{n+1})=\infty.
 \end{gathered}
 \label{CJ}
 \end{equation}

Although the arguments given previously do not ensure the discreteness of the spectrum under \eqref{JN1}, in contrast to \eqref{JN} or \eqref{CJ}, these arguments do not require the divergence of $|b_n|$ neither the inequality $b_n \ge a_n+a_{n-1}$.  Furthermore, they have the advantage of being generalizable to yield infinitely many conditions for self-adjointness (concrete comparisons between these infinitely many conditions and \eqref{JN} or \eqref{CJ} will be shown in \S\ref{Comparisons}).

For instance, inserting the $n-1$-th and $n+1$-th inequalities of \eqref{ineq_vn_bis} into the $n$-th one we get
{\scriptsize
 \[
 |v_n|^2 \le
 4 \left\{
 (\gamma_n^-)^2
 \left[(\gamma_{n-1}^-)^2 |v_{n-2}|^2 + (\gamma_{n-1}^+)^2 |v_n|^2\right]
 + (\gamma_n^+)^2
 \left[(\gamma_{n+1}^-)^2|v_n|^2+(\gamma_{n+1}^+)^2|v_{n+2}|^2\right]
 \right\}.
 \]
}
This implies that
{\scriptsize
 \[
 \sum_{n=1}^\infty |v_n|^2 \le
 4 \sum_{n=1}^\infty
 \left[
 (\gamma_{n+2}^-)^2 (\gamma_{n+1}^-)^2 + (\gamma_n^-)^2 (\gamma_{n-1}^+)^2
 + (\gamma_n^+)^2 (\gamma_{n+1}^-)^2 + (\gamma_{n-2}^+)^2 (\gamma_{n-1}^+)^2
 \right]
 |v_n|^2.
 \]
}
In consequence, $T$ must be self-adjoint under the condition
 \[
 (\gamma_{n+2}^-)^2 (\gamma_{n+1}^-)^2 + (\gamma_n^-)^2 (\gamma_{n-1}^+)^2
 + (\gamma_n^+)^2 (\gamma_{n+1}^-)^2 + (\gamma_{n-2}^+)^2 (\gamma_{n-1}^+)^2
 < \frac{1}{4},
 \quad n\ge1.
 \]
Using finite rank perturbations, just as in the previous case, this result leads to the following more general one.

\begin{thm} \label{thm-JN2}
$T$ is self-adjoint if there exists an index $n_0\in\mathbb{N}$ such that
 \begin{equation}
 \frac{a_n^2}{b_{n+1}^2}
 \left(\frac{a_{n+1}^2}{b_{n+2}^2} + \frac{a_n^2}{b_n^2}\right)
 + \frac{a_{n-1}^2}{b_{n-1}^2}
 \left(\frac{a_{n-1}^2}{b_n^2} + \frac{a_{n-2}^2}{b_{n-2}^2}\right)
 < \frac{1}{4},
 \qquad n \ge n_0.
 \label{JN2}
 \tag{$\mathfrak{D}_2$}
 \end{equation}
\end{thm}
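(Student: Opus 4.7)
The plan is to complete the argument sketched in the three displayed lines immediately above the statement, and then upgrade the conclusion from a blanket bound valid for all $n\ge 1$ to the ``eventually'' form required by \eqref{JN2}, exactly as was done in the proof of Theorem \ref{thm-JN1}.

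First I invoke the orthogonal polynomial characterization of self-adjointness recalled in \S\ref{OmC}: with $v_n = p_n(i)$, the operator $T$ fails to be self-adjoint only if $\sum_{n=1}^\infty |v_n|^2 < \infty$. Assume temporarily that \eqref{JN2} holds for every $n\ge 1$. Summing the once-iterated form of \eqref{ineq_vn_bis} displayed in the text over $n\ge 1$, and shifting indices in the contributions containing $|v_{n-2}|^2$ and $|v_{n+2}|^2$ (the boundary terms produced being absorbed by the conventions $\gamma_n^-=0$ for $n\le 1$ and $\gamma_n^+=v_n=0$ for $n\le 0$), one obtains
\[
\sum_{n=1}^\infty |v_n|^2 \le 4\sum_{n=1}^\infty \bigl[(\gamma_{n+2}^-)^2(\gamma_{n+1}^-)^2 + (\gamma_n^-)^2(\gamma_{n-1}^+)^2 + (\gamma_n^+)^2(\gamma_{n+1}^-)^2 + (\gamma_{n-2}^+)^2(\gamma_{n-1}^+)^2\bigr]|v_n|^2.
\]
A direct substitution of $\gamma_n^-=a_{n-1}/|b_n|$ and $\gamma_n^+=a_n/|b_n|$ identifies the bracket with the left-hand side of \eqref{JN2}, which by hypothesis is strictly less than $1/4$. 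If $\sum |v_n|^2$ were finite, the above would force $\sum(1-4C_n)|v_n|^2=0$, where $C_n$ denotes the bracket; since $1-4C_n>0$, this would yield $v_n=0$ for all $n$, contradicting $v_1=1$. Hence $\sum|v_n|^2=\infty$ and $T$ is self-adjoint in this preliminary case.

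To remove the restriction $n_0=1$, I would repeat the finite-rank perturbation trick used at the end of the proof of Theorem~\ref{thm-JN1}: build an operator $\widetilde T$ agreeing with $T$ outside finitely many entries by replacing with sufficiently small positive reals every $a_k$, and with nonzero reals every vanishing $b_k$, that occurs in a summand of \eqref{JN2} for some $n<n_0$. Then $\widetilde T$ satisfies \eqref{JN2} for every $n\ge 1$ and is self-adjoint by the step just completed, while $T-\widetilde T$ is a bounded self-adjoint operator, so $T$ is self-adjoint too. The only point demanding care, rather than a genuine obstacle, is the index-shift bookkeeping: one has to verify that the boundary terms produced when the $|v_{n\pm 2}|^2$ pieces are relabelled all vanish by virtue of the zero conventions on $\gamma_n^\pm$ and $v_n$ set up before \eqref{ineq_vn_bis}.
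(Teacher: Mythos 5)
Your proposal is correct and follows essentially the same route as the paper: sum the once-iterated form of \eqref{ineq_vn_bis}, identify the resulting bracket with the left-hand side of \eqref{JN2} to contradict convergence of $\sum_{n}|v_n|^2$ (using $v_1=1$), and then pass from $n_0=1$ to general $n_0$ by the same finite-rank perturbation argument used for Theorem \ref{thm-JN1}. No gaps.
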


\eqref{JN1} and \eqref{JN2} are again particular cases of general $m$-conditions for self-adjointness. They are obtained by an iterative use of the eigenvalue equations \eqref{Intro_OPRR} via the inequality \eqref{ineq_vn_bis}.

\begin{thm} \label{thm-JNm}
For any $m\in\mathbb{N}$, the existence of an index $n_0\in\mathbb{N}$ such that
 \begin{equation}
 \widetilde{G}_{m,n} < \frac{1}{2^m},
 \kern9pt
 \widetilde{G}_{m,n} = \kern-12pt
 \sum_{(\boldsymbol{j}_{m+1}|\boldsymbol{k}_m)\in\widehat{\mathcal{I}}_m}
 \frac{a_{n+j_{m+1}-k_1}^2}{b_{n+j_{m+1}-j_1}^2}
 \cdots \frac{a_{n+j_{m+1}-k_m}^2}{b_{n+j_{m+1}-j_m}^2},
 \kern9pt n\ge n_0,
 \label{JNm}
 \tag{$\mathfrak{D}_m$}
 \end{equation}
implies that $T$ is self-adjoint.
\end{thm}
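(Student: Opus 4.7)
The plan is to mimic the inductive scheme of Proposition \ref{deltaN} and of the proof of Theorem \ref{Carm}, but at the level of squared norms, using the quadratic inequality \eqref{ineq_vn_bis} in place of the linear one \eqref{ineq_vn}. The argument will then close exactly as in the cases $m=1,2$ (Theorems \ref{thm-JN1} and \ref{thm-JN2}): by invoking the $\ell^2$ characterization of non-self-adjointness together with a finite-rank perturbation to remove the restriction $n\ge n_0$.

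First I would prove by induction on $m$ the bound
\[
|v_n|^2 \le 2^m \kern-8pt \sum_{(\boldsymbol{j}_{m+1}|\boldsymbol{k}_m)\in\widehat{\mathcal{I}}_m} \frac{a_{n-k_1}^2}{b_{n-j_1}^2} \cdots \frac{a_{n-k_m}^2}{b_{n-j_m}^2} \, |v_{n-j_{m+1}}|^2,
\]
with the usual conventions $\gamma_n^-=0$ for $n\le 1$, $\gamma_n^+=v_n=0$ for $n\le 0$, and $\gamma_n^\pm=\infty$ when $b_n=0$. The base case $m=1$ is \eqref{ineq_vn_bis} itself. For the induction step I would apply \eqref{ineq_vn_bis} once more to each $|v_{n-j_{m+1}}|^2$; the two new summands correspond respectively to $(j_{m+2},k_{m+1})=(j_{m+1}+1,j_{m+1}+1)$ (from the $\gamma^-$ branch) and $(j_{m+1}-1,j_{m+1})$ (from the $\gamma^+$ branch), which exactly match the two clauses in the definition of $\widehat{\mathcal{I}}_{m+1}$.

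Next I would sum this bound over $n\ge 1$ and, within each multi-indexed term, perform the shift $n\mapsto n+j_{m+1}$. Since $|j_{m+1}|\le m$, the boundary adjustments produced by the shift either vanish by the convention $v_n=0$ for $n\le 0$ (when $j_{m+1}>0$) or correspond to extending a sum of nonnegative terms (when $j_{m+1}<0$); in both cases the factors become $a_{n+j_{m+1}-k_s}^2/b_{n+j_{m+1}-j_s}^2$, reassembling by definition into $\widetilde{G}_{m,n}$. This yields
\[
\sum_{n=1}^\infty |v_n|^2 \le 2^m \sum_{n=1}^\infty \widetilde{G}_{m,n}\,|v_n|^2.
\]
If $2^m\widetilde{G}_{m,n}<1$ for every $n\ge 1$, the inequality is incompatible with $(v_n)\in\ell^2\setminus\{0\}$, so $\sum|v_n|^2$ diverges and $T$ is self-adjoint by the orthogonal polynomial criterion recalled before Theorem \ref{Car1}.

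Finally, to pass from this special case to the statement, where \eqref{JNm} is only assumed from some $n_0$ on, I would reuse verbatim the finite-rank perturbation argument from Theorem \ref{thm-JN1}: modify the finitely many coefficients $a_n$ (by shrinking them) and $b_n$ (by replacing any zeros by small nonzero values) appearing in the finitely many failing instances of \eqref{JNm} to obtain an operator $\widetilde{T}$ for which \eqref{JNm} holds at every $n\ge 1$ (hence self-adjoint by the previous paragraph) and such that $T-\widetilde{T}$ is a bounded self-adjoint operator of finite rank; then $T$ is self-adjoint too. The main obstacle is the bookkeeping step: one must check carefully that the pairings $(j_{m+2},k_{m+1})$ do generate $\widehat{\mathcal{I}}_{m+1}$ from $\widehat{\mathcal{I}}_m$, and that the shift $n\mapsto n+j_{m+1}$ reassembles the summands precisely into $\widetilde{G}_{m,n}$; both are purely combinatorial consistency checks on the set $\widehat{\mathcal{I}}_m$.
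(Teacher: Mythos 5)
Your proposal is correct and follows essentially the same route as the paper: the same induction on $m$ starting from \eqref{ineq_vn_bis} to get the bound $|v_n|^2 \le 2^m \sum_{(\boldsymbol{j}_{m+1}|\boldsymbol{k}_m)\in\widehat{\mathcal{I}}_m} \frac{a_{n-k_1}^2}{b_{n-j_1}^2}\cdots\frac{a_{n-k_m}^2}{b_{n-j_m}^2}|v_{n-j_{m+1}}|^2$ (the paper cites the induction of Proposition \ref{deltaN} rather than spelling out the combinatorics, but your identification of the two branches with the two clauses defining $\widehat{\mathcal{I}}_{m+1}$ is exactly what that induction does), then summation over $n$, reindexing into $\widetilde{G}_{m,n}$, the $\ell^2$ contradiction, and the finite-rank perturbation to handle $n\ge n_0$. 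No gaps.
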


\begin{proof}
Assume that $b_n\ne0$ for $n\ge1$. Proceeding by induction analogously to the proof of Proposition \ref{deltaN}, we find from \eqref{ineq_vn_bis} that
 \[
 |v_n|^2 \le 2^m
 \sum_{(\boldsymbol{j}_{m+1}|\boldsymbol{k}_m)\in\widehat{\mathcal{I}}_m}
 \frac{a_{n-k_1}^2}{b_{n-j_1}^2} \frac{a_{n-k_2}^2}{b_{n-j_2}^2}
 \cdots \frac{a_{n-k_m}^2}{b_{n-j_m}^2} \, |v_{n-j_{m+1}}|^2,
 \]
where $a_k=v_k=0$ for $k\le0$. Summing for $n\ge1$ we obtain
 \[
 \sum_{n=1}^\infty |v_n|^2 \le 2^m \sum_{n=1}^\infty
 \widetilde{G}_{m,n}
 \, |v_n|^2.
 \]
Therefore, $T$ is self-adjoint whenever $\widetilde{G}_{m,n} < 2^{-m}$. The theorem follows from this result resorting to finite rank perturbations, just as in the case of Theorem \ref{thm-JN1}.
\end{proof}

A weaker but more practical version of this theorem reads as follows.

\begin{cor} \label{cor-JNm}
For any $m\in\mathbb{N}$, the condition
 \[
 \limsup_{n\to\infty} \widetilde{G}_{m,n} < \frac{1}{2^m}
 \]
implies that $T$ is self-adjoint.
\end{cor}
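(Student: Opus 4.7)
The plan is to reduce the corollary directly to Theorem \ref{thm-JNm} by unpacking the definition of $\limsup$. Let $L=\limsup_{n\to\infty}\widetilde{G}_{m,n}$, and assume $L<2^{-m}$. First I would choose a positive number $\varepsilon$ strictly smaller than the gap $2^{-m}-L$, for instance $\varepsilon=(2^{-m}-L)/2$, so that $L+\varepsilon<2^{-m}$. By the very definition of limsup, there exists an index $n_0\in\mathbb{N}$ such that $\widetilde{G}_{m,n}<L+\varepsilon$ for every $n\ge n_0$; in particular $\widetilde{G}_{m,n}<2^{-m}$ for all $n\ge n_0$. This is exactly the hypothesis of Theorem \ref{thm-JNm}, which then delivers the self-adjointness of $T$.

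Since the whole content of the statement is this short chain of implications, there is no real obstacle: the only point that requires a moment of care is the strict inequality, which is why one must introduce the auxiliary $\varepsilon$ rather than apply the definition of limsup directly with $2^{-m}$ in place of $L+\varepsilon$. All the analytic work (the iterative use of \eqref{ineq_vn_bis}, the bound on $\sum_{n=1}^\infty|v_n|^2$, and the finite rank perturbation argument replacing vanishing $b_n$) is already encapsulated in Theorem \ref{thm-JNm}, so nothing further has to be done.
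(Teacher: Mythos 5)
Your proposal is correct and is exactly the intended argument: the paper states the corollary without proof as a ``weaker but more practical version'' of Theorem \ref{thm-JNm}, the implicit justification being precisely the reduction you give, namely that $\limsup_{n\to\infty}\widetilde{G}_{m,n}<2^{-m}$ forces $\widetilde{G}_{m,n}<2^{-m}$ for all $n\ge n_0$, which is the hypothesis of that theorem. Nothing is missing.
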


The following consequence of Theorem \ref{thm-JNm} should be compared with Theorem \ref{strong} and Corollary \ref{cor-Car}.

\begin{thm} \label{cor-JN}
For any $m\in\mathbb{N}$, the condition
 \[
 \lim_{n\to\infty} G_{m,n} = 0
 \]
implies that $T$ is self-adjoint.
\end{thm}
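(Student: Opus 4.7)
The plan is to reduce Theorem \ref{cor-JN} to the already-established Corollary \ref{cor-JNm} by showing that the hypothesis $\lim_{n\to\infty} G_{m,n}=0$ actually forces the stronger conclusion $\widetilde{G}_{m,n}\to 0$, which in particular gives $\limsup_{n\to\infty}\widetilde{G}_{m,n}<2^{-m}$.

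The pivotal observation is the close structural similarity between $G_{m,n}$ and $\widetilde{G}_{m,n}$: both are finite sums of products of ratios $a_?/|b_?|$ indexed by the closely related sets $\mathcal{I}_m$ and $\widehat{\mathcal{I}}_m$. First I would check that the forgetful map $\pi\colon\widehat{\mathcal{I}}_m\to\mathcal{I}_m$, $(\boldsymbol{j}_{m+1}|\boldsymbol{k}_m)\mapsto(\boldsymbol{j}_m|\boldsymbol{k}_m)$, is a bijection: given $(\boldsymbol{j}_m|\boldsymbol{k}_m)\in\mathcal{I}_m$, the choice $k_m\in\{j_m,j_m+1\}$ already determines whether $j_{m+1}=j_m-1$ or $j_{m+1}=j_m+1$ via the two alternatives $k_m=j_m=j_{m+1}+1$ and $k_m=j_m+1=j_{m+1}$. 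Under $\pi$, the summand of $\widetilde{G}_{m,n}$ attached to $(\boldsymbol{j}_{m+1}|\boldsymbol{k}_m)$ is exactly the square of the $\pi(\boldsymbol{j}_{m+1}|\boldsymbol{k}_m)$-summand of $G_{m,n+j_{m+1}}$, as is seen by substituting $n'=n+j_{m+1}$ in the definition of $\widetilde{G}_{m,n}$.

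Since every summand of $G_{m,n+j_{m+1}}$ is nonnegative and bounded above by the whole sum, each summand of $\widetilde{G}_{m,n}$ is bounded by $G_{m,n+j_{m+1}}^2$. The recursion $j_1=0$, $|j_{s+1}-j_s|=1$ moreover forces $|j_{m+1}|\leq m$, so the shift is uniformly bounded. Summing over the finite set $\widehat{\mathcal{I}}_m$ yields
\[
\widetilde{G}_{m,n}\leq\sum_{(\boldsymbol{j}_{m+1}|\boldsymbol{k}_m)\in\widehat{\mathcal{I}}_m}G_{m,n+j_{m+1}}^2\leq|\widehat{\mathcal{I}}_m|\max_{|j|\leq m}G_{m,n+j}^2.
\]
Because $|\widehat{\mathcal{I}}_m|$ depends only on $m$ and $\lim_{n\to\infty}G_{m,n}=0$, the right-hand side tends to $0$; hence $\widetilde{G}_{m,n}<2^{-m}$ for all large enough $n$, and Corollary \ref{cor-JNm} yields the self-adjointness of $T$.

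The only nontrivial step is the combinatorial bookkeeping: verifying that $\pi$ is well-defined and bijective, and that the squared-summand matching survives the index shift $n\mapsto n+j_{m+1}$. Once this correspondence is set up, the analytic passage to the limit is immediate.
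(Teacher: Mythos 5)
Your proposal is correct and follows essentially the same route as the paper: both reduce the statement to Theorem \ref{thm-JNm} by observing that each summand of $\widetilde{G}_{m,n}$ is the square of a summand of $G_{m,n+j_{m+1}}$ with a uniformly bounded shift $|j_{m+1}|\le m$, so that $\lim_{n\to\infty}G_{m,n}=0$ forces $\lim_{n\to\infty}\widetilde{G}_{m,n}=0$ and hence $\widetilde{G}_{m,n}<2^{-m}$ for large $n$. The paper phrases this as a chain of term-by-term equivalences using the nonnegativity of the summands, whereas you package it as the explicit bound $\widetilde{G}_{m,n}\le|\widehat{\mathcal{I}}_m|\max_{|j|\le m}G_{m,n+j}^2$, but the underlying argument is the same.
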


\begin{proof}
The result follows from Theorem \ref{thm-JNm} and the equivalences
 \[
 \begin{aligned}
 & \lim_{n\to\infty} \widetilde{G}_{m,n} = 0
 \;\Leftrightarrow\;
 \lim_{n\to\infty}
 \frac{a_{n+j_{m+1}-k_1}}{b_{n+j_{m+1}-j_1}}
 \cdots
 \frac{a_{n+j_{m+1}-k_m}}{b_{n+j_{m+1}-j_m}}
 = 0,
 \kern5pt \forall (\boldsymbol{j}_{m+1}|\boldsymbol{k}_m) \in \widehat{\mathcal{I}}_m,
 \\
 & \;\Leftrightarrow\;
 \lim_{n\to\infty}
 \frac{a_{n-k_1}}{b_{n-j_1}}
 \cdots
 \frac{a_{n-k_m}}{b_{n-j_m}}
 = 0,
 \kern5pt \forall (\boldsymbol{j}_m|\boldsymbol{k}_m) \in \mathcal{I}_m
 \;\Leftrightarrow\;
 \lim_{n\to\infty} G_{m,n} = 0.
 \end{aligned}
 \]
\end{proof}

 \section{Examples and comparisons of $m$-conditions}
 \label{Comparisons}
 \setcounter{equation}{0}
 \renewcommand{\theequation}{\thesection.\arabic{equation}}
 \setcounter{thm}{0}
 \renewcommand{\thethm}{\thesection.\arabic{thm}}
 \par

We will compare the previous sets of $m$-conditions with known results for self-adjointness. Before doing this we will discuss the relation between $m$-conditions for different values of $m$ to understand the relevance of developing sets of infinitely many different conditions for self-adjointness.

In what follows we use the common notations $y_n \sim z_n$ and $y_n \asymp z_n$, which stand for the relations $\lim_{n\to\infty}y_n/z_n=1$ and $C_1z_n \le |y_n| \le C_2z_n$ ($C_1,C_2>0$ and $n$ big enough) respectively.

As we mentioned in \S\ref{BR}, conditions \eqref{eq-strong} are all
independent, so that all of them are equally important. This is shown by the following example.

\begin{exa} \label{ex-B} ${}$
\begin{enumerate}

\item
$a_n=n^\alpha$, $b_n=n^{\alpha+1}$, with $\alpha>0$.

This choice leads to $a_nG_{m,n}^+ \asymp n^{\alpha-m}$, so $\lim_{n\to\infty}a_nG_{m,n}^+\ne0$ for $m\le\alpha$, while $\lim_{n\to\infty}a_nG_{m,n}^+=0$ for $m>\alpha$.

\item $a_n=n^\alpha$ for even $n$, $a_n=n^{-\alpha}$ for odd $n$, $b_n=n^{\alpha-1}$, with $\alpha>1$.

In this case,
 \[
 \begin{aligned}
 & a_nG_{m,n}^+ \sim a_n
 \left(
 \frac{a_{n-1}}{b_n} \frac{a_{n-1}}{b_{n-1}}
 \frac{a_{n-1}}{b_n} \frac{a_{n-1}}{b_{n-1}}
 \cdots
 \right)
 & & \text{ odd } n,
 \\
 & a_nG_{m,n}^+ \sim a_n \frac{a_{n-1}}{b_n}
 \left(
 \frac{a_{n-2}}{b_{n-1}} \frac{a_{n-2}}{b_{n-2}}
 \frac{a_{n-2}}{b_{n-1}} \frac{a_{n-2}}{b_{n-2}}
 \cdots
 \right)
 & & \text{ even } n.
 \end{aligned}
 \]
Therefore, $a_nG_{m,n}^+ \sim n^{m-\alpha}$, which implies that $\lim_{n\to\infty}a_nG_{m,n}^+\ne0$ for $m\ge\alpha$ and $\lim_{n\to\infty}a_nG_{m,n}^+=0$ for $m<\alpha$.
\end{enumerate}

In both examples we conclude that $T$ is self-adjoint and $\Lambda(T)=\sigma(T)$, but the $m$-conditions providing these results are different. Besides, given $m_0\in\mathbb{N}$, \eqref{eq-strong} is satisfied for $m=m_0$ but not for $m<m_0$ in the first example with $\alpha=m_0-1$, while it is satisfied for $m=m_0$ but not for $m>m_0$ in the second example with $\alpha=m_0+1$. This shows that given two of the conditions \eqref{eq-strong}, none of them is stronger than the other one.
\end{exa}

The next example illustrates a similar complementarity for the Carleman type conditions \eqref{eq-Carm}.

\begin{exa} \label{ex-C} ${}$
\begin{enumerate}

\item Example \ref{ex-B}.1.

We get $a_nG_{m,n} \asymp n^{\alpha-m}$, thus $\sum_{n=m+1}^\infty 1/a_nG_{m,n} < \infty$ for $m<\alpha-1$ and $\sum_{n=m+1}^\infty 1/a_nG_{m,n} = \infty$ for $m\ge\alpha-1$.

\item $a_n=n^{1/\alpha}$, $b_n=1$, with $\alpha\ge1$.

We find that $a_nG_{m,n} \asymp n^{(m+1)/\alpha}$. Thus, $\sum_{n=m+1}^\infty 1/a_nG_{m,n} < \infty$ for $m>\alpha-1$ and $\sum_{n=m+1}^\infty a_nG_{m,n} = \infty$ for $m\le\alpha-1$.

\end{enumerate}

We find again that $T$ is self-adjoint in both cases. However, if $\alpha=m_0+1$, \eqref{eq-Carm} holds for $m=m_0$ and not for $m<m_0$ in the first example, while it holds for $m=m_0$ and not for $m>m_0$ in the second example. This shows the independence of conditions \eqref{eq-Carm}.
\end{exa}

Regarding conditions \eqref{JNm} the situation is somewhat different. To see this let us use the definitions of $\widehat{\mathcal{I}}_m$ and $\widehat{\mathcal{I}}_{m+1}$ to write
 \begin{equation} \label{rec-G}
 \begin{aligned}
 \widetilde{G}_{m+1,n} & =
 \sum_{(\boldsymbol{j}_{m+1}|\boldsymbol{k}_m)\in\widehat{\mathcal{I}}_m}
 \frac{a_{n+j_{m+1}-1-k_1}^2}{b_{n+j_{m+1}-1-j_1}^2}
 \cdots \frac{a_{n+j_{m+1}-1-k_m}^2}{b_{n+j_{m+1}-1-j_m}^2}
 \frac{a_{n-1}^2}{b_{n-1}^2}
 \\
 & + \sum_{(\boldsymbol{j}_{m+1}|\boldsymbol{k}_m)\in\widehat{\mathcal{I}}_m}
 \frac{a_{n+j_{m+1}+1-k_1}^2}{b_{n+j_{m+1}+1-j_1}^2}
 \cdots \frac{a_{n+j_{m+1}+1-k_m}^2}{b_{n+j_{m+1}+1-j_m}^2}
 \frac{a_n^2}{b_{n+1}^2}
 \\
 & = \frac{a_{n-1}^2}{b_{n-1}^2} \, \widetilde{G}_{m,n-1}
 + \frac{a_n^2}{b_{n+1}^2} \, \widetilde{G}_{m,n+1}.
 \end{aligned}
 \end{equation}
Hence, $\widetilde{G}_{m+1,n} \le \widetilde{G}_{1,n} \max\{\widetilde{G}_{m,n-1},\widetilde{G}_{m,n+1}\}$, which shows by induction that \eqref{JN1} implies the rest of conditions \eqref{JNm}. The interest in conditions with higher values of $m$ rests on the existence of examples satisfying \eqref{JNm} for a given value of $m$, but not for any smaller index. This is illustrated by the next example.

\begin{exa} \label{ex-D}
$a_n=a_{n-1}$ if $n=0\,(\operatorname{mod} q)$, $a_n=n^{q+1}a_{n-1}$ otherwise, $b_n=n^qa_{n-1}$, with $q\in\{2,3,\dots\}$ and $a_1=b_1=1$.

In this case $a_n/b_n=n^{-q}$ if $n=0\,(\operatorname{mod} q)$, $a_n/b_n=n$ otherwise and $a_{n-1}/b_n=n^{-q}$. We find from \eqref{rec-G} that
 \[
 \widetilde{G}_{m,n} \ge
 \frac{a_{n-1}^2}{b_{n-1}^2} \frac{a_{n-2}^2}{b_{n-2}^2} \cdots
 \frac{a_{n-m}^2}{b_{n-m}^2} \sim n^{2m},
 \qquad n=0\,(\operatorname{mod} q), \qquad m<q,
 \]
so that \eqref{JNm} does not hold for this example if $m<q$. This changes when $m=q$ because any term of $\widetilde{G}_{m=q,n}$ has among its factors at least one with the form $a_k^2/b_k^2$, $k=0\,(\operatorname{mod} q)$, or with the form $a_{k-1}^2/b_k^2$. Thus, $\widetilde{G}_{m=q,n}\asymp n^{-2q} n^{2(m-1)}=n^{-2}$ and the $m$-condition is satisfied for $m=q$, proving that $T$ is self-adjoint.
\end{exa}

Let us see now that the $m$-conditions allow us to prove the self-adjointness of certain examples where known results give no or less information. We will see that this is the case of known self-adjointness criteria given in terms of the coefficients $a_n$, $b_n$, like Carleman \eqref{CAR}, Dennis-Wall \eqref{eq-DW}, Janas-Naboko \eqref{JN} (for the case of divergent $|b_n|$) or Cojuhari-Janas \eqref{CJ} conditions.

A first instance of this is given by Example \ref{ex-D}. Then, $(a_n^2+a_{n-1}^2)/b_n^2 \sim n^2$ and $\beta_n=b_n-a_n-a_{n-1}=(n^q-n^{q+1}-1)a_{n-1}<0$ for $n\ne0\,(\operatorname{mod} q)$, so neither \eqref{JN} nor \eqref{CJ} are satisfied. Besides, the relation between $a_n$ and $a_{n-1}$ shows that Carleman's condition does not hold either. This behavior of $a_n$ also proves that Dennis-Wall condition is not applicable since $b_n/a_na_{n-1}=n^q/a_n$. Therefore, Example \ref{ex-D} shows that the $m$-conditions \eqref{JNm} can improve the results obtained using conditions \eqref{CAR}, \eqref{eq-DW}, \eqref{JN} and \eqref{CJ}. The next examples illustrate this fact regarding the $m$-conditions \eqref{eq-strong} and \eqref{eq-Carm}, which give no information in Example \ref{ex-D}.

\begin{exa} \label{ex-B-comp}
$a_n=n^\alpha$, $b_n=n^\beta$ for even $n$, $b_n=n^\gamma$ for odd $n$, with $\alpha,\beta,\gamma>1$.

Concerning condition \eqref{JN}, $(a_n^2+a_{n-1}^2)/b_n^2 \sim 2n^{2(\alpha-\beta)}$ for even $n$, while $(a_n^2+a_{n-1}^2)/b_n^2 \sim 2n^{2(\alpha-\gamma)}$ for odd $n$. Hence, Janas-Naboko criterion guarantees the self-adjointness of $T$ for $\alpha<\min\{\beta,\gamma\}$. The same result is obtained from Cojuhari-Janas condition \eqref{CJ} because $\beta_n=n^\beta-n^\alpha-(n-1)^\alpha$ or $\beta_n=n^\gamma-n^\alpha-(n-1)^\alpha$ depending whether $n$ is even or odd. Since $\sum_{n=1}^\infty n^{-\alpha} < \infty$ for $\alpha>1$, Carleman's criterion is not applicable.
On the other hand, $|b_n|/a_na_{n-1} \sim n^{\beta-2\alpha}$ for even $n$ and  $|b_n|/a_na_{n-1} \sim n^{\gamma-2\alpha}$ for odd $n$, thus Dennis-Wall criterion states that $T$ is self-adjoint when $\alpha\le(\max\{\beta,\gamma\}+1)/2$.

To compare these results with that one provided by Theorem \ref{strong}, note that
 \[
 \begin{aligned}
 & a_nG_{m,n}^+ \asymp n^{(m+1)\alpha-k(\beta+\gamma)},
 & & \quad \text{even } m=2k,
 \\
 & a_nG_{m,n}^+ \asymp n^{(m+1)\alpha-(k+1)\beta-k\gamma},
 & & \quad \text{odd } m=2k+1, \; \text{ even } n,
 \\
 & a_nG_{m,n}^+ \asymp n^{(m+1)\alpha-k\beta-(k+1)\gamma},
 & & \quad \text{odd } m=2k+1, \; \text{ odd } n.
 \end{aligned}
 \]
Therefore, the requirement $\lim_{n\to\infty} a_nG_{m,n}^+=0$ reads as
 \[
 \begin{aligned}
 & 2(m+1)\alpha < m(\beta+\gamma),
 & & \quad \text{even } m,
 \\
 & 2(m+1)\alpha < (m+1)(\beta+\gamma)-2\max\{\beta,\gamma\},
 & & \quad \text{odd } m.
 \end{aligned}
 \]
We conclude that condition \eqref{eq-strong} holds for some value of $m$ iff the parameters $\alpha,\beta,\gamma$ satisfy the $m\to\infty$ inequality $\alpha < (\beta+\gamma)/2$. This improves the results given by the four previous criteria, not only because the self-adjointness of $T$ is ensured for a bigger region in the space of parameters $\alpha,\beta,\gamma>1$, but also because we get additionally the equality $\Lambda(T)=\sigma(T)$ in that region.
\end{exa}

Theorems \ref{Carm} and \ref{thm-JNm} give no additional information for the previous example. However, this fact changes in the following one.

\begin{exa} \label{ex-C-comp}
$a_n=n^\alpha$, $b_n=b^n$ if $n\in\Delta$, $b_n=n^\beta$ otherwise, with $\alpha,\beta>1$, $0<b<1$ and $\Delta=\{k^2:k\in\mathbb{N}\}$.

Conditions \eqref{JN} and \eqref{CJ} do not hold because $(a_n^2+a_{n-1}^2)/b_n^2 \asymp n^{2\alpha}b^{-2n}$ and $\beta_n=b^n-n^\alpha-(n-1)^\alpha<0$ for $n\in\Delta$. Carleman's criterion gives no information since $\alpha>1$. As for Dennis-Wall criterion, $|b_n|/a_na_{n-1} \sim b^nn^{-2\alpha}$ for $n\in\Delta$, while $|b_n|/a_na_{n-1} \sim n^{\beta-2\alpha}$ for $n\notin\Delta$. Thus, the divergence of $\sum_{n\ge1}|b_n|/a_na_{n-1}$ is equivalent to the divergence of $\sum_{n\notin\Delta} n^{-\gamma}$, $\gamma=2\alpha-\beta$, which diverges simultaneously with $\sum_{n\ge1} n^{-\gamma}$. This last statement, obvious when $\gamma\le0$, follows in the case $\gamma>0$ from the decreasing character of $n^{-\gamma}$ which leads to the inequality $\sum_{n\in\Delta} n^{-\gamma} \le 1 + \sum_{n\notin\Delta} n^{-\gamma}$. This proves that $\sum_{n\notin\Delta} n^{-\gamma} \le \sum_{n\ge1} n^{-\gamma} \le 1 + 2\sum_{n\notin\Delta} n^{-\gamma}$ for $\gamma>0$. Therefore, Dennis-Wall criterion guarantees the self-adjointness of $T$ for $\gamma\le1$, i.e. $\alpha \le (\beta+1)/2$.

Concerning the new self-adjointness $m$-conditions given in this paper, $|b_n|$ is unbounded but not divergent, which prevents the use of Theorem \ref{strong}. Regarding conditions \eqref{JNm}, none of them hold due to the inequalities
 \[
 \widetilde{G}_{m,n} \ge
 \frac{a_{n-1}^2}{b_{n-1}^2} \frac{a_{n-2}^2}{b_{n-2}^2}
 \cdots \frac{a_{n-m}^2}{b_{n-m}^2}
 \ge b^{-2n} n^{2[m\alpha-(m-1)\beta]},
 \qquad n-1\in\Delta.
 \]

To analyze the Carleman type conditions \eqref{eq-Carm}, let us consider the subsets $\Delta_m = \{n\in\mathbb{N}:\operatorname{dist}(n,\Delta)<m\}$. If $n\in\mathbb{N}\setminus\Delta_m$, then $b_{n-j}\sim n^\beta$ for $|j|<m$, hence $a_nG_{m,n} \asymp n^{(m+1)\alpha-m\beta}$. Since $\sum_{n\ge m+1} 1/a_nG_{m,n} \ge \sum_{n\in\mathbb{N}\setminus\Delta_m} 1/a_nG_{m,n}$, the divergence of $\sum_{n\in\mathbb{N}\setminus\Delta_m} n^{-\delta}$ for some of the coefficients $\delta=(m+1)\alpha-m\beta$ implies the self-adjointness of $T$.

Let us prove that $\sum_{n\in\mathbb{N}\setminus\Delta_m} n^{-\delta}$ and $\sum_{n\in\mathbb{N}} n^{-\delta}$ diverge simultaneously, i.e. when $\delta\le1$. Obviously $\sum_{n\in\mathbb{N}\setminus\Delta_m} n^{-\delta} = \infty$ for $\delta\le0$. Let us suppose that $\delta>0$. Since $\lim_{k\to\infty}[(k+1)^2-k^2]=\infty$, for any $m\in\mathbb{N}$ there exists $k_0\in\mathbb{N}$ such that $\mathbb{N}\setminus\Delta_m$ has at least $2m-1$ points in $[(k-1)^2,k^2]$ for $k\ge k_0$. Then, the decreasing character of $n^{-\delta}$ for $\delta>0$ ensures that
 \[
 \sum_{n\in(\mathbb{N}\setminus\Delta_m)\cap[(k-1)^2,k^2]}
 \kern-25pt n^{-\delta}
 \kern15pt \ge
 \sum_{n\in\mathbb{N}\cap(k^2-m,k^2+m)} \kern-17pt n^{-\delta},
 \qquad k\ge k_0,
 \]
which gives
 \[
 \sum_{n\in\mathbb{N}\setminus\Delta_m} n^{-\delta} \kern7pt
 \ge \sum_{n\in\Delta_m\cap(k_0^2-m,\infty)} \kern-15pt n^{-\delta}.
 \]
Therefore,
 \[
 \sum_{n\in\mathbb{N}\setminus\Delta_m} n^{-\delta} \kern7pt
 \le \kern7pt \sum_{n\in\mathbb{N}} \kern5pt n^{-\delta} \kern5pt
 \le \sum_{n\in\Delta_m\cap[1,k_0^2-m]} \kern-15pt n^{-\delta} \kern3pt
 + \kern3pt 2 \kern-3pt \sum_{n\in\mathbb{N}\setminus\Delta_m} n^{-\delta},
 \]
proving that the divergence of $\sum_{n\in\mathbb{N}\setminus\Delta_m} n^{-\delta}$ is equivalent to the divergence of $\sum_{n\in\mathbb{N}} n^{-\delta}$ for $\delta>0$ too.

We conclude that, as a consequence of the Carleman type $m$-conditions, $T$ is self-adjoint whenever $(m+1)\alpha-m\beta\le1$ for some $m\in\mathbb{N}$, i.e. when $\alpha<\beta$. This result improves the one obtained with Dennis-Wall criterion.
\end{exa}

The application of Carleman type $m$-conditions to the above example does not depend on the precise values of $b_n$ for $n\in \Delta$, neither on the details of the set $\Delta=\{n_1,n_2,\dots\}$ provided that $\lim_{k\to\infty}(n_{k+1}-n_k)=\infty$. Therefore, the results of Example \ref{ex-C-comp} are the same assuming only this general property of the set $\Delta$, for any choice of $b_n$ on this set.

Moreover, the arguments in the above example apply, {\it mutatis mutandis}, to give the following general result: if the coefficients $a_n$, $b_n$ satisfy \eqref{eq-Carm} for some value of $m$ and give a non decreasing sequence $a_nG_{m,n}$ for $n$ big enough, then not only $T$ is self-adjoint, but also any other symmetric tridiagonal operator obtained from $T$ by perturbing arbitrarily the coefficients $a_n$, $b_n$ on a set $\Delta=\{n_1,n_2,\dots\}$ with $\lim_{k\to\infty}(n_{k+1}-n_k)=\infty$.

It is worth remarking that in all the previous examples the conclusions remain unchanged when substituting the equality in the choice of $a_n$ and $b_n$ by the asymptotic condition $\asymp$. For instance, similar arguments to those given in Example~\ref{ex-B-comp} prove that $T$ is self-adjoint and $\Lambda(T)=\sigma(T)$ if $a_n\asymp n^\alpha$, $b_n\asymp n^\beta$ for even $n$, $b_n\asymp n^\gamma$ for odd $n$, with $\beta,\gamma>0$ and $\alpha<(\beta+\gamma)/2$.

 \section{Applications}
 \label{Applications}
 \setcounter{equation}{0}
 \renewcommand{\theequation}{\thesection.\arabic{equation}}
 \setcounter{thm}{0}
 \renewcommand{\thethm}{\thesection.\arabic{thm}}
 \par

As we mentioned in the introduction, when $T$ is self-adjoint, the related Jacobi continued fraction $K(\lambda)$ given in \eqref{Intro_CF} converges to the first diagonal element $((\lambda-T)^{-1}e_1,e_1)$ of the resolvent of $T$ for every $\lambda\in\mathbb{C}\setminus\Lambda(T)$. Moreover, this convergence is uniform on compact subsets of $\mathbb{C}\setminus\Lambda(T)$. From this point of view, any information about the set $\Lambda(T)$ is of interest because it gives also information about the analyticity properties of $K(\lambda)$.

On the other hand, Theorem \ref{strong} establishes conditions under which the knowledge of $\Lambda(T)$ is equivalent to the knowledge of the more accessible set given by the spectrum $\sigma(T)$ of $T$. This permits us to apply techniques of spectral theory to study the analyticity properties of Jacobi continued fractions. In this regard, the special case of Theorem \ref{strong} given by Theorem \ref{weak} is particularly useful due to the simplicity of its hypothesis which make them easily verifiable.

Theorem \ref{weak} becomes also especially interesting due to its consequences concerning the properties of $\sigma(T)$. It is known that the divergence of $|b_n|$ and condition \eqref{JN} imply that $T$ has a pure point spectrum $\sigma(T)=\cup_n\{\lambda_n\}$ with $|\lambda_n|$ divergent \cite{JN2001}. Since $a_n,a_{n-1}=o(|b_n|)$ implies \eqref{JN}, we find that the hypothesis of Theorem \ref{weak} ensure this kind of unbounded discrete spectrum.

This has remarkable consequences for the Jacobi continued fraction $K(\lambda)$. If $T$ is self-adjoint and $\Lambda(T)=\sigma(T)$, then $K(\lambda)$ represents a meromorphic function in $\mathbb{C}$ precisely when $T$ has a discrete spectrum with eigenvalues $\lambda_n$ such that $\lim_{n\to\infty}|\lambda_n| = \infty$ \cite{IKP2007}. Therefore, Theorem \ref{weak} has the following implications for the convergence of Jacobi continued fractions.

\begin{thm} \label{convergence}
The conditions
 \[
 \begin{aligned}
 & \lim_{n\to\infty}|b_n|=\infty,
 \\
 & a_n,a_{n-1}=o(|b_n|^r) \text{ for some } r<1,
 \end{aligned}
 \]
imply that the Jacobi continued fraction $K(\lambda)$ given in \eqref{Intro_CF} represents a meromorphic function in $\mathbb{C}$. This continued fraction converges uniformly on compact subsets of $\mathbb{C}\setminus\cup_n\{\lambda_n\}$, where $\lambda_n$ are the eigenvalues of $T$.
\end{thm}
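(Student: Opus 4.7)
The plan is to assemble Theorem~\ref{convergence} almost entirely from facts already collected in the paper, so the proof is essentially a chain of implications. First I would invoke Theorem~\ref{weak} on the hypotheses $\lim|b_n|=\infty$ and $a_n,a_{n-1}=o(|b_n|^r)$ with $r<1$, which immediately yields the self-adjointness of $T$ together with the equality $\Lambda(T)=\sigma(T)$.

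Next, I would check that the spectrum $\sigma(T)$ is discrete with eigenvalues diverging in modulus. The key observation is that the hypothesis is strictly stronger than $a_n,a_{n-1}=o(|b_n|)$, since $r<1$ and $|b_n|\to\infty$ give $|b_n|^r=o(|b_n|)$. Therefore $a_n^2/b_n^2\to0$ and $a_{n-1}^2/b_n^2\to0$, so the Janas--Naboko condition \eqref{JN} holds in the trivial form $\limsup (a_n^2+a_{n-1}^2)/b_n^2=0<1/2$. Combined with the divergence of $|b_n|$, the result of Janas and Naboko cited in the introduction to this section (from \cite{JN2001}) ensures that $T$ has a pure point spectrum $\sigma(T)=\cup_n\{\lambda_n\}$ with $|\lambda_n|\to\infty$.

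Finally, I would feed these two pieces into the background facts on Jacobi continued fractions quoted at the start of the section. Self-adjointness of $T$ means that $K(\lambda)$ converges to $((\lambda-T)^{-1}e_1,e_1)$ uniformly on compact subsets of $\mathbb{C}\setminus\Lambda(T)$; the equality $\Lambda(T)=\sigma(T)=\cup_n\{\lambda_n\}$ rewrites this exceptional set as the discrete set of eigenvalues. Moreover, the characterization from \cite{IKP2007} cited just before the theorem states that, when $T$ is self-adjoint and $\Lambda(T)=\sigma(T)$, the continued fraction $K(\lambda)$ represents a meromorphic function in $\mathbb{C}$ precisely when $\sigma(T)$ is discrete with $|\lambda_n|\to\infty$, a condition we have just verified.

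Since each step is a direct citation of a previously established result, there is really no hard step here; the only thing I would take care over is making the simple inclusion $o(|b_n|^r)\subset o(|b_n|)$ explicit so that the Janas--Naboko hypothesis is clearly available, and checking that the exceptional set appearing in the two quoted facts agrees with $\cup_n\{\lambda_n\}$ via the identification $\Lambda(T)=\sigma(T)$ supplied by Theorem~\ref{weak}.
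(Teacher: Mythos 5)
Your proposal is correct and follows essentially the same route as the paper: the paper's own justification (given in the paragraphs preceding the theorem) is precisely the chain Theorem~\ref{weak} $\Rightarrow$ self-adjointness and $\Lambda(T)=\sigma(T)$, then $a_n,a_{n-1}=o(|b_n|)$ $\Rightarrow$ \eqref{JN} $\Rightarrow$ discrete spectrum with $|\lambda_n|\to\infty$ by \cite{JN2001}, then the meromorphy characterization from \cite{IKP2007}. Your explicit remark that $o(|b_n|^r)\subset o(|b_n|)$ when $|b_n|\to\infty$ and $r<1$ is a useful clarification of a step the paper leaves implicit, but it is the same argument.
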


The relevance of Theorem \ref{weak} is also illustrated by the examples in the literature which are covered by this theorem. For instance, this is the case of \cite{JM2007,JN2004,M2007,M2009,M2010,V2004}, which deal with the asymptotic analysis of the eigenvalues of $T$ for different choices of coefficients with a power like behaviour $a_n \asymp n^\alpha$, $b_n \asymp n^\beta$ (more generally, $a_n \le an^\alpha$, $b_n \ge bn^\beta$), where $\alpha<\beta$. Theorem \ref{weak} provides a computational method to approximate such eigenvalues and suggests an approach to their asymptotics by studying the eigenvalues of the truncated operators $T_N$ as $N\to\infty$, a technique already exploited for example in \cite{A1994,IKP2007,IP2001,M2007,M2010,MZ2012,V2004}.

 \subsubsection*{Acknowledgements}
 E. Petropoulou would like to express her gratitude to the Department of Applied Mathematics of the University of Zaragoza, Spain, for the hospitality during her sabbatical leave there, from February 2012 until July 2012, when this work was initiated.

 The research of L. Vel\'azquez is partially supported by the research project MTM2011-28952-C02-01 from the Ministry of Science and Innovation of Spain and the European Regional Development Fund (ERDF), and by Project E-64 of Diputaci\'on General de Arag\'on (Spain).


\begin{thebibliography}{99}
 \bibitem{A1994}
 W. Arveson, $C^*$-algebras and numerical linear algebra, J. Funct. Anal. 122 (2) (1994) 333--360.
 \bibitem{B1968}
 J. M. Berezanskii, Expansions in Eigenfunctions of Selfadjont Operators, Translations of Mathematical Monographs, Vol. XVII, Amer. Math. Soc., Providence, 1968.
 \bibitem{B1994}
 C. Berg, Markov's theorem revisited, J. Approx. Theory 78 (1994) 260--275.
 \bibitem{BLMT1998}
 D. Barrios, G. L\'opez, A. Mart\'{\i}nez, E. Torrano, On the domain of convergence and poles of $J$-fractions, J. Approx. Theory 93 (1998) 177--200.
 \bibitem{BLT1995}
 D. Barrios, G. L\'opez, E. Torrano, Location of zeros and asymptotics of polynomials satisfying three-term recurrence relations with complex coefficients, Russian Acad. Sci. Sb. Math. 80 (1995) 309--333.
 \bibitem{CMV2006}
 M. J. Cantero, L. Moral, L. Vel\'azquez, Measures on the unit circle and unitary truncations of unitary operators, J. Approx. Theory 139 (2006) 430--468.
 \bibitem{C1923}
 T. Carleman, Sur les \'equations int\'egrales singuli\`{e}res \`{a} noyau r\'eel et symm\'etrique, Uppsala Universitets Arsskrift, 1923, 228 pp.
 \bibitem{C1978}
 T. S. Chihara, An Introduction to Orthogonal Polynomials, Mathematics and Its Applications, Vol. XIII, Gordon and Breach, New York-London-Paris, 1978.
 \bibitem{CJ2007}
 A. P. Cojuhari, J. Janas, Discreteness of the spectrum for some unbounded Jacobi matrices, Acta Sci. Math. (Szeged) 73 (2007) 649--667.
 \bibitem{DW1945}
 J. J. Dennis, H. S. Wall, The limit-circle case for a positive definite J-fraction, Duke Math. J. 12 (1945) 255--273.
 \bibitem{DP2002}
 J. Dombrowski, S. Pedersen, Absolute continuity for unbounded Jacobi matrices with constant row sums, J. Math. Anal. Appl. 277 (2002) 695--713.
 \bibitem{IKP2007}
 E. K. Ifantis, C. G. Kokologiannaki, E. Petropoulou, Limit points of eigenvalues of truncated unbounded tridiagonal operators, Cent. Eur. J. Math. 5 (2) (2007) 335--344.
 \bibitem{IP2001}
 E. K. Ifantis, P. N. Panagopoulos, Limit points of eigenvalues of truncated
tridiagonal operators, J. Comput. Appl. Math. 133 (2001) 413--422.
 \bibitem{IS1995}
 E. K. Ifantis, P. D. Siafarikas, An alternative proof of a theorem of Stieltjes and related results, J. Comput. Appl. Math. 65 (1995) 165--172.
 \bibitem{JM2007}
 J. Janas, M. Malejki, Alternative approaches to asymptotic behaviour of eigenvalues of some unbounded Jacobi matrices, J. Comput. Appl. Math. 200 (2007) 342--356.
 \bibitem{JN2001}
 J. Janas, S. Naboko, Multithreshold spectral phase transition examples in a class of unbounded Jacobi matrices, in: Recent Advances in Operator Theory, Oper. Theory: Adv. Appl. 124 (2001) 267--285, Birkh\"auser-Verlag, Basel.
 \bibitem{JN2004}
 J. Janas, S. Naboko, Infinite Jacobi matrices with unbounded entries: Asymptotics of eigenvalues and the transformation operator approach, SIAM J. Math. Anal. 36 (2) (2004) 643--658.
 \bibitem{JNS2009}
 J. Janas, S. Naboko, G. Stolz, Decay bounds on eigenfunctions and the singular spectrum of unbounded Jacobi matrices, Internat. Math. Res. Notices (IMRN) 2009 (4) (2009) 736--764.
 \bibitem{M2007}
 M. Malejki, Approximation of eigenvalues of some unbounded self-adjoint discrete Jacobi matrices by eigenvalues of finite submatrices, Opuscula Math. 27 (1) (2007) 37--49.
 \bibitem{M2009}
 M. Malejki, Asymptotics of large eigenvalues for some discrete unbounded Jacobi
 matrices, Linear Algebra Appl. 431 (2009) 1952--1970.
 \bibitem{M2010}
 M. Malejki, Approximation and asymptotics of eigenvalues of unbounded self-adjoint Jacobi matrices acting in $\ell^2$ by the use of finite submatrices, Cent. Eur. J. Math. 8 (1) (2010) 114--128.
 \bibitem{MZ2012}
 A. B. de Monvel, L. Zielinski, Explicit error estimates for eigenvalues of some unbounded Jacobi matrices, in: Spectral Theory, Mathematical System Theory, Evolution Equations, Differential and Difference Equations, IWOTA10,
Oper. Theory: Adv. Appl. {\bf 221} (2012), 189--217, Springer, Basel.
 \bibitem{S2008}
 J. Sahbani, Spectral theory of certain unbounded Jacobi matrices, J. Math. Anal. Appl. 342 (2008) 663--681.
 \bibitem{S1998}
 B. Simon, The Classical Moment Problem as a Self-Adjoint Finite Difference Operator, Adv. Math. 137 (1998) 82--203.
 \bibitem{S1932}
 M. H. Stone, Linear Transformations in Hilbert Space, Amer. Math. Soc. Colloq. Publ., Vol. XV, AMS, New York, 1932.
 \bibitem{V2004}
 H. Volkmer, Error estimates for Rayleigh-Ritz approximations of eigenvalues and eigenfunctions of the Mathieu and spheroidal wave equation, Constr. Approx. 20 (2004) 39--54.
 \end{thebibliography}
 \end{document}